\newtheorem{theorem}{Theorem}[section]
\newtheorem{lemma}[theorem]{Lemma}
\theoremstyle{definition}
\def\mathunderaccent#1#2 {\let\theaccent#1\skewfactor#2
\mathpalette\putaccentunder}
\def\putaccentunder#1#2{\oalign{$#1#2$\crcr\hidewidth
\vbox to.2ex{\hbox{$#1\skew\skewfactor\theaccent{}$}\vss}\hidewidth}}
\newtheorem{definition}[theorem]{Definition}
\newtheorem*{assumption*}{Assumption}
\newtheorem{fact}[theorem]{Fact}
\theoremstyle{remark}
\newtheorem{remark}[theorem]{Remark}
\newcommand{\AP}{{\mathrm{AP}}}
\newcommand{\IS}{{\mathrm{IS}}}
\newcommand{\APp}{{\mathrm{AP}_\lambda}}
\newcommand{\veps}{\varepsilon}
\DeclareMathOperator{\Sym}{Sym}
\DeclareMathOperator{\dom}{dom}
\newcommand{\bfa}{{\mathbf{a}}}
\newcommand{\bfb}{{\mathbf{b}}}
\newcommand{\bfc}{{\mathbf{c}}}
\newcommand{\bfC}{{\mathbf{C}}}
\newcommand{\btpi}{{\bm{\tilde\pi}}}
\newcommand{\btphi}{{\bm{\tilde\phi}}}
\newcommand{\bfpi}{{\bm{\pi}}}
\newcommand{\bfB}{{\mathbf{B}}}
\newcommand{\bftB}{{\tilde{\mathbf{B}}}}
\title[Nowhere trivial automorphisms of ${P(\lambda)/[\lambda]^{<\lambda}}$]{Nowhere trivial automorphisms of $P(\lambda)/[\lambda]^{<\lambda}$,\\ for $\lambda$ inaccessible.}
\author{Jakob Kellner}
\address{Technische Universität Wien (TU Wien).}
\email{jakob.kellner@tuwien.ac.at}
\urladdr{\url{http://dmg.tuwien.ac.at/kellner/}}
\author{Saharon Shelah}
\address{The Hebrew University of Jerusalem and Rutgers University.}
\email{shlhetal@mat.huji.ac.il}
\urladdr{\url{http://shelah.logic.at/}}
\thanks{The first author was funded by FWF Austrian science funds projects P33420	and P33895. The second author was partially support by the Israel Science Foundation (ISF) grant no: 2320/23. This is publication Sh:1251 in the second author's list.}
\date{2024-03-04}
\begin{document}
\maketitle

\newcommand{\myte}{T1}
\newcommand{\mytz}{T2}
\newcommand{\thetext}{If $\lambda$ is (strongly) inaccessible and $2^\lambda = \lambda^+$,
then there is a nowhere trivial automorphism of
the Boolean algebra 
$\mathcal P(\lambda)/[\lambda]^{<\lambda}$.}

\section{Introduction}

We investigate the rigidity of the Boolean algebra
$P(\lambda)/[\lambda]^{<\lambda}$,
for $\lambda$ inaccessible.

For $\lambda=\omega$ there is extensive
literature on this topic (see, e.g., the survey~\cite{farah2024corona});
some general results on $P(\lambda)/[\lambda]^{<\kappa}$
can be found in~\cite{MR3480121}. In~\cite{Sh:1224} it was shown, for
$\lambda$ inaccessible and 
$2^\lambda=\lambda^{++}$, that conistently
every automorphism is densely trivial.

In this paper we show:
\begin{equation}
\tag{Thm.~\ref{thm:a}}\parbox{0.8\textwidth}{\thetext}    
\end{equation}

Note that the weaker variant  ``there is a \emph{nontrivial} automorphism''
follows from~\cite[Lem.~3.2]{Sh:990} (the proof there was faulty, and fixed in~\cite{Sh:990a}); and for $\lambda$ measurable, a proof (again only for ``nontrivial'') was given in~\cite{Sh:1224}.

We also show:
\begin{equation}
\tag{Thm.~\ref{thm:b}}\parbox{0.8\textwidth}{It is consistent that $\lambda$ is inaccessible, $2^\lambda$ an arbitrary regular cardinal, and that there is a nowhere trivial automorphism of $\mathcal P(\lambda)/[\lambda]^{<\lambda}$.}    
\end{equation}

\section{Notation} 
We will always assume that
$\lambda$ is inaccessible.    

For $A\subseteq \lambda$, 
$[A]^{<\lambda}$ denotes the subsets 
of $A$ of size less than $\lambda$;
and $[A]^{\lambda}$ those of size $\lambda$.
With $[A]$ we denote the equivalence class of 
$A$ modulo $[\lambda]^{<\lambda}$.
We write $A=^*B$ for $[A]=[B]$, and $A\subseteq^* B$ for
$|A\setminus B|<\lambda$.

However, we also use $f[A]:=\{f(a):\, a\in A\}$.
So for example $[f[A]]$ is the equivalence class of the $f$-image of $A$.
$f\in \Sym(X)$ means that $f:X\to X$ is bijective.

We consider  $\mathcal P(\lambda)/[\lambda]^{<\lambda}$ as Boolean algebra.
A (Boolean algebra)  automorphism $\pi$ of $\mathcal P(\lambda)/[\lambda]^{<\lambda}$
is called trivial on $A$ (for $A\in [\lambda]^\lambda$)
if there is an $f\in\Sym(\lambda)$ such that
$\pi([B])=[f[B]]$ for all $B\subseteq A$. 
$\pi$ is called nowhere trivial, if there is no such pair $(f,A)$.

For $\delta\le\lambda$, $C\subseteq \delta$ closed and nonempty, and $\alpha\in C$, we set 
\[I^*(C\subseteq  \delta, \alpha):=\big\{\beta:\,  \alpha\le \beta<\min\big((C\cup\{\delta\})\setminus (\alpha+1)\big)\big\}.\]
So the $I^*(C\subseteq  \delta, \alpha)$, for $\alpha\in C$,
form an increasing interval partition of $\delta\setminus\min(C)$.

\section{Approximations}

In this section, we define the set $\AP$ of ``approximations''. 
An approximation $\bfa$ will induce a 
``partial monomorphism'' $\btpi^\bfa$ defined on some $\bftB^\bfa$ which is 
trivial, i.e., generated by some $\bfpi^\bfa\in\Sym(\lambda)$.
We will use such approximations to build a  nowhere trivial automorphism $\btphi$ as limit (i.e., $\btphi\restriction \bftB^ \bfa=\btpi^ \bfa$), cf.~Fact~\ref{fact:central}.


\begin{definition}\label{def:AP}
    $\AP$ is the set of objects $\bfa$ consisting of
    $\bf\bfC^\bfa$, $\bfpi^\bfa$
    and ${\bfB}^\bfa$, 
    such that: 
    \begin{itemize}
        \item $\bfpi^\bfa\in\Sym(\lambda)$.
        \item $\bf\bfC^\bfa\subseteq \lambda $ club 
        such  that $\bfpi^\bfa\restriction 
        \veps\in\Sym(\veps)$
        for all $\veps\in \bf\bfC^\bfa$.
                \item ${\bfB}^\bfa$ is a subset of $\mathcal{P}(\lambda)$.
    \end{itemize}
\end{definition}
$\bfpi^\bfa$ induces a (trivial) automorphism of
$P(\lambda)/[\lambda]^{<\lambda}$, and 
$\btpi^\bfa$ is the 
restriction of this automorphism to $\bfB^\bfa$:
\begin{definition}
    \begin{itemize}
        
        \item 
$\bftB^\bfa:=\bfB^\bfa/[\lambda]^{<\lambda}=\{[A]:\, A\in \bfB^\bfa\}$.
\item 
$\btpi^\bfa: \bftB^\bfa\to P(\lambda)/[\lambda]^{<\lambda}$ is defined by 
$[A]\mapsto [\bfpi^\bfa[A]]$.
\item For $\bfa\in\AP$ and $\veps\in \bf\bfC^\bfa$
we set $I^\bfa_\veps:=I^*(\bf\bfC^\bfa\subseteq\lambda,\veps)$.
    \end{itemize}
\end{definition}
So the $I^\bfa_\veps$ form an increasing interval partition of $\lambda\setminus\min(\bf\bfC^\bfa)$;
and $\bfpi^\bfa\restriction I^\bfa_\veps\in\Sym(I^\bfa_\veps)$.
  

            
        


\begin{definition}\label{def:lenew}
    $\bfb\ge_\AP \bfa$, if $\bfa,\bfb\in \AP$ and
    \begin{enumerate}
        \item  
        $\bf\bfC^\bfb\subseteq^* \bf\bfC^\bfa$.
            \item     
        $\bfpi^\bfb\restriction I^\bfa_\veps
        =\bfpi^\bfa\restriction I^\bfa_\veps$ 
            for  all but boundedly many $\veps\in \bfC^\bfb$.
        \item $\bfB^\bfb\supseteq \bfB^\bfa$, and 
        $\btpi^\bfb$ extends $\btpi^\bfa$.
            \end{enumerate}
\end{definition}
I.e., if $A\in \bfB^\bfa$, then
        $\bfpi^\bfa[A]=^*\bfpi^\bfb[A]$.

$\le_\AP$ is a nonempty quasi order.

\begin{lemma}\label{lem:trivial2}
    If $(\bfa_i)_{i<\delta}$ is an
    $\le_\AP
    $ increasing chain 
    such that $\bigcup_{i<\delta} \bftB^{\bfa_i}=P(\lambda)/[\lambda]^{<\lambda}$, then
    $\btphi:=\bigcup_{i<\delta} \btpi^{\bfa_i}$ is
    an Boolean algebra monomorphism of $P(\lambda)/[\lambda]^{<\lambda}$.

    If additionally $\bigcup_{i<\delta} \btpi^{\bfa_i}[\bftB^{\bfa_i}]=P(\lambda)/[\lambda]^{<\lambda}$, then $\btphi$ is an automorphism.
\end{lemma}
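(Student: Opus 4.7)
The plan is to check, in turn, that $\btphi$ is a well-defined function on all of $P(\lambda)/[\lambda]^{<\lambda}$, that it is a Boolean algebra homomorphism, and that it is injective; the automorphism addendum then follows immediately from the extra surjectivity assumption.

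For well-definedness, the key input is condition~(3) of Definition~\ref{def:lenew}: whenever $i\le j<\delta$, $\btpi^{\bfa_j}$ extends $\btpi^{\bfa_i}$, so the partial maps in the chain pairwise agree where they overlap. Thus $\btphi=\bigcup_{i<\delta}\btpi^{\bfa_i}$ is a well-defined function whose domain is $\bigcup_i \bftB^{\bfa_i}$, which by hypothesis equals $P(\lambda)/[\lambda]^{<\lambda}$.

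For the homomorphism property, I would exploit the fact that each $\btpi^{\bfa_k}$ is induced by a \emph{genuine} permutation $\bfpi^{\bfa_k}\in\Sym(\lambda)$, so it commutes set-theoretically with $\cup$, $\cap$ and complement. Given $[A],[B]\in P(\lambda)/[\lambda]^{<\lambda}$, I would first locate one $k<\delta$ with each of the finitely many classes $[A]$, $[B]$, $[A\cup B]$, $[A\cap B]$, $[\lambda\setminus A]$ lying in $\bftB^{\bfa_k}$, which is possible since the chain is increasing and already covers everything. Picking representatives $A',B'\in\bfB^{\bfa_k}$ of $[A],[B]$ (and analogous representatives for the other three classes), the set-theoretic identity $\bfpi^{\bfa_k}[A'\cup B']=\bfpi^{\bfa_k}[A']\cup\bfpi^{\bfa_k}[B']$ and its analogues for $\cap$ and complement pass to the quotient and show $\btphi$ is a Boolean-algebra homomorphism.

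For injectivity, it suffices to check that $\btphi([A])=[\emptyset]$ forces $[A]=[\emptyset]$. Given $|A|=\lambda$, I would choose $k$ with $[A]\in\bftB^{\bfa_k}$ and a representative $C\in\bfB^{\bfa_k}$ with $[C]=[A]$; then $|\bfpi^{\bfa_k}[C]|=|C|=\lambda$, so $\btphi([A])=[\bfpi^{\bfa_k}[C]]\ne[\emptyset]$. Surjectivity under the additional hypothesis is then immediate: any $[D]$ equals $\btpi^{\bfa_i}([B])=\btphi([B])$ for some $i$ and $[B]\in\bftB^{\bfa_i}$. I do not expect any real obstacle here; the only mild subtlety is that the sets $\bftB^{\bfa_k}$ need not be closed under Boolean operations, which forces the small bookkeeping step of moving up the chain to a single index containing all of $[A]$, $[B]$ and their Boolean combinations before applying the genuine-bijection argument. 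Conditions~(1) and~(2) of Definition~\ref{def:lenew} play no role at this stage; they will matter only when the chain of approximations is actually being constructed.
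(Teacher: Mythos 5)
Your proposal is correct and follows essentially the same strategy as the paper's proof: fix a single index $k$ large enough that all finitely many relevant classes lie in $\bftB^{\bfa_k}$, and then exploit that $\bfpi^{\bfa_k}$ is a genuine bijection of $\lambda$ to transfer the Boolean operations and injectivity through the quotient. The only cosmetic difference is that you phrase injectivity via triviality of the kernel plus the homomorphism property, while the paper checks $X_1\ne X_2\Rightarrow\btphi(X_1)\ne\btphi(X_2)$ directly; both rest on the same observation that a bijection preserves $=^*$.
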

\begin{proof}
We use $\vee$ and $^c$ for the Boolean-algebra-operations,
i.e., $[A\cup B]=[A]\vee[B]$, 
and $[A]^c=[\lambda\setminus A]$.
It is enough to show that $\btphi$ is injective,
honors $\vee$ and $^c$, and maps $[\emptyset]$ to itself.

    For $X_1,X_2$ in $P(\lambda)/[\lambda]^{<\lambda}$
    there is an 
    $i<\delta$ and some
    $A_1,A_2,A_\text{union}$ in $\bfB^{\bfa_i}$,
    such that $[A_j]=X_j$ for $j=1,2$ and $[A_\text{union}]=[A_1\cup A_2]=X_1\vee X_2$.
    Then 
    \[
    \bfpi^{\bfa_i}[A_\text{union}]=^*
    \bfpi^{\bfa_i}[A_1\cup A_2] =   
    \bfpi^{\bfa_i}[A_1]\cup  \bfpi^{\bfa_i}[A_2],
    \]
    and
    \begin{gather*}
    \btphi(X_1\vee X_2)=\btpi^{\bfa_i}
    ([A_\text{union}])=
    [\bfpi^{\bfa_i}[A_\text{union}]]=\\
    =\btpi^{\bfa_i}([A_1])\vee \btpi^{\bfa_i}([A_2])=
    \btphi(X_1)\vee \btphi(X_2).
    \end{gather*}
    If $X_1\ne X_2$, i.e., $A_1\ne^* A_2$, 
    then $\bfpi^{\bfa_i}[A_1]\ne^*  \bfpi^{\bfa_i}[A_2]$, i.e., $\btphi(X_1)\ne\btphi(X_2)$.

    Similarly we can show $\btphi([\lambda\setminus A_1])=
    \btphi([A_1])^c$ and $\btphi([\emptyset])=[\emptyset]$.    
\end{proof}

\begin{definition}
    For a pair $(f,A)$ with $A\in[\lambda]^\lambda$ and
    $f\in\Sym(\lambda)$, we say $\bfa\in \AP$ ``spoils $(f,A)$'',
    if there is an $A'\in[A]^\lambda\cap \bfB^\bfa$
    such that $|\bfpi^\bfa[A']\cap f[A']|<\lambda$.
\end{definition}
If  $\btphi$  is an automorphism extending 
such a $\btpi^\bfa$, then $f$ cannot witness that $\btphi$ is trivial on $A$.
Therefore:
\begin{fact}\label{fact:central}
   If $(\bfa_i)_{i<\delta}$ is an
    $\le_\AP
    $ increasing chain 
    such that 
    \begin{itemize}
        \item $\bigcup_{i<\delta} \bftB^{\bfa_i}=
    \bigcup_{i<\delta} \btpi^{\bfa_i}[\bftB^{\bfa_i}]=
    P(\lambda)/[\lambda]^{<\lambda}$,  and
    \item for every $(f,A)$ there is an $i<\delta$ such that
    $\bfa_i$ spoils $(f,A)$,
    \end{itemize}
    then
    $\btphi:=\bigcup_{i<\delta} \btpi^{\bfa_i}$ is
    a nowhere trivial Boolean algebra automorphism of $P(\lambda)/[\lambda]^{<\lambda}$.
\end{fact}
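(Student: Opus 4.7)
The plan is to use Lemma~\ref{lem:trivial2} to get that $\btphi$ is an automorphism, and then show that the spoiling hypothesis directly rules out triviality on any $A\in[\lambda]^\lambda$. The two halves decouple cleanly: the first is a citation, and the second is a short contradiction argument obtained by computing $\btphi([A'])$ in two different ways for the witness $A'$ produced by the spoiling condition.

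More precisely, first I would note that by the first bullet point and Lemma~\ref{lem:trivial2}, $\btphi:=\bigcup_{i<\delta}\btpi^{\bfa_i}$ is a well-defined Boolean algebra automorphism of $P(\lambda)/[\lambda]^{<\lambda}$; nothing new needs to be proven here. So it only remains to verify nowhere triviality.

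For that, I would fix an arbitrary pair $(f,A)$ with $f\in\Sym(\lambda)$ and $A\in[\lambda]^\lambda$ and suppose toward contradiction that $\btphi$ is trivial on $A$ via $f$, meaning $\btphi([B])=[f[B]]$ for every $B\subseteq A$. By the second bullet point of the hypothesis, choose $i<\delta$ such that $\bfa_i$ spoils $(f,A)$; so there is $A'\in [A]^\lambda\cap \bfB^{\bfa_i}$ with $|\bfpi^{\bfa_i}[A']\cap f[A']|<\lambda$. Now compute $\btphi([A'])$ two ways: since $A'\in\bfB^{\bfa_i}$, the definition of $\btphi$ gives $\btphi([A'])=\btpi^{\bfa_i}([A'])=[\bfpi^{\bfa_i}[A']]$; and since $A'\subseteq A$, the triviality assumption gives $\btphi([A'])=[f[A']]$.

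Therefore $\bfpi^{\bfa_i}[A']=^* f[A']$, so in particular $|\bfpi^{\bfa_i}[A']\cap f[A']|=\lambda$ (since both sets have cardinality $\lambda$ and differ in fewer than $\lambda$ elements), contradicting the choice of $A'$ as a spoiling witness. Hence no such pair $(f,A)$ can witness triviality of $\btphi$ on any $A\in[\lambda]^\lambda$, completing the proof. I do not anticipate any serious obstacle: the definition of ``spoils'' was designed exactly to make this two-way computation a contradiction, and all the structural work (that $\btphi$ is at least an automorphism) is already encapsulated in Lemma~\ref{lem:trivial2}.
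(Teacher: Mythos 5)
Your proof is correct and follows exactly the route the paper takes: Lemma~\ref{lem:trivial2} supplies the automorphism, and nowhere triviality is the observation (stated just before the fact, immediately after the definition of ``spoils'') that a spoiling witness $A'\subseteq A$ yields the contradiction $\bfpi^{\bfa_i}[A']=^* f[A']$ versus $|\bfpi^{\bfa_i}[A']\cap f[A']|<\lambda$. You have simply written out the details that the paper leaves implicit.
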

We will use this fact both in the case $2^\lambda=\lambda^+$, as well as in the forcing construction to get a nowhere trivial automorphism.

\medskip
We will often modify an $\bfa\in \AP$
by replacing $\bfB^\bfa$ with another $B\subseteq  P(\lambda)$. Let the result be $\bfb$.
We call $\bfb$ ``$\bfa$ with
$\bfB$ replaced by $B$'', or
``$\bfa$ with $X$ added to
$\bfB$'' in case $B=\bfB^\bfa\cup\{X\}$. Obviously
$\bfb\in \AP$, and if $B\supseteq \bfB^\bfa$ then 
$\bfb\ge _\AP \bfa$.

Similarly we can get a stronger approximation by thinning out $\bfC$. To summarize:
\begin{fact}\label{fact24}
    If $\bfa\in\AP$,
        $D\subseteq \bf\bfC^\bfa$ club, and 
        $B\subseteq P(\lambda)$ with $B\supseteq \bfB^\bfa$.
    Then  $\bfb\ge_\AP \bfa$, for the $\bfb$
    defined by 
    $\bfpi^\bfb:=\bfpi^\bfa$,
    $\bfC^\bfb:= D$ 
    and $\bfB^\bfb:=B$.
\end{fact}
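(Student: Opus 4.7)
The statement is a packaging lemma that combines two monotonicity moves (shrinking the club, enlarging $\bfB$) into a single clause. There is no serious obstacle; the plan is just a direct verification against Definitions~\ref{def:AP} and~\ref{def:lenew}, so I will not expect any step to be hard.

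First I would check that $\bfb\in\AP$. The map $\bfpi^\bfb=\bfpi^\bfa$ is in $\Sym(\lambda)$ by assumption on $\bfa$; the set $\bfC^\bfb=D$ is a club by hypothesis; and for $\veps\in D\subseteq \bfC^\bfa$, the property $\bfpi^\bfa\restriction\veps\in\Sym(\veps)$ (from $\bfa\in\AP$) transfers verbatim to $\bfpi^\bfb\restriction\veps\in\Sym(\veps)$. Finally $\bfB^\bfb=B\subseteq \cP(\lambda)$ by hypothesis.

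Next I would verify the three clauses of $\le_\AP$. Clause~(1), $\bfC^\bfb\subseteq^* \bfC^\bfa$, is immediate from $D\subseteq \bfC^\bfa$. Clause~(2) requires $\bfpi^\bfb\restriction I^\bfa_\veps=\bfpi^\bfa\restriction I^\bfa_\veps$ for all but boundedly many $\veps\in \bfC^\bfb$; here the exceptional set is actually empty, because the two permutations are literally equal. Clause~(3) has two halves: $\bfB^\bfb\supseteq \bfB^\bfa$ is the assumption $B\supseteq \bfB^\bfa$, and for the extension property, if $A\in \bfB^\bfa$ then a one-line computation
\[
\btpi^\bfb([A])=[\bfpi^\bfb[A]]=[\bfpi^\bfa[A]]=\btpi^\bfa([A])
\]
gives what is needed.

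The only real content, as far as a reader is concerned, is to notice that the statement really \emph{is} a consequence of merely changing two coordinates of $\bfa$ in independent, permissible ways; the value of isolating Fact~\ref{fact24} lies in its later use in chain constructions, where one wants to invoke both thinning and enlargement without rewriting the verification each time.
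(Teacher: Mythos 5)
Your verification is correct and is exactly what the paper leaves implicit: the paper gives no proof at all, just introduces the two modifications in prose (calling the $\bfB$-enlargement case ``obvious'' and the club-thinning case ``similar'') before packaging them as Fact~\ref{fact24}. Your line-by-line check against Definitions~\ref{def:AP} and~\ref{def:lenew} is the intended routine verification.
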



In the definition of $\le_\AP$ we require that 
some things hold ``apart from a bounded set'', or equivalently,
``above some $\alpha$''.
We say that $\alpha$ is good for an increasing sequence of $\bfa_i$, 
if the requirements for each pair are met above $\alpha$.
We will generally only be able to find such an $\alpha$ for ``short sequences'':
\begin{definition}\label{def:good}
\begin{enumerate}
\item
$\APp$ is the set of $\bfa\in\AP$ such that $|\bfB^\bfa|\le\lambda$.  Analogously for $\AP_{<\lambda}$.
    \item $(\bfa_i)_{i\in J}$ is a ``short sequence'', if
    $J<\lambda$ (or more generally, $J$ is a set of ordinals with $|J|<\lambda$), each $\bfa_i\in \AP_{<\lambda}$,
    and the sequence is $\le_\AP$-increasing, i.e., $j>i$ in $J$
    implies $\bfa_j\ge_\AP \bfa_i$.
    \item 
Let $\bar\bfa:=(\bfa_i)_{i\in J}$ be
short. 
We say that $\alpha$ is good for $\bar\bfa$, if
for all $i\le k$ in $J$:
\begin{enumerate}
    \item $\alpha\in \bfC^{\bfa_i}$.
    \item\label{item:epitrjqw} $\bfC^{\bfa_k}\subseteq \bfC^{\bfa_i}$ above $\alpha$.
    (I.e., $\beta\ge\alpha$ and $\beta\in \bfC^{\bfa_k}$
    implies $\beta\in\bfC^{\bfa_i}$.)
    \item $\bfpi^{\bfa_k}\restriction I^{\bfa_i}_\veps
    =\bfpi^{\bfa_i}\restriction I^{\bfa_i}_\veps$
    for all $\veps\ge\alpha$ in $\bfC^{\bfa_k}$.
    \item \label{item:epitrjqwz}
        $\bfpi^{\bfa_i}[A]\setminus\alpha=\bfpi^{\bfa_k}[A]\setminus\alpha$,
        for all $A\in \bfB^{\bfa_i}$.
\end{enumerate}  
\item 
For $\bfa,\bfb$ in $\AP_{<\lambda}$, we say 
$\bfb>_\zeta \bfa$, if $\zeta$ is good for the 
sequence $\langle\bfa,\bfb\rangle$.
\end{enumerate}
\end{definition}

So in particular if $\bfb$ is the result of 
enlarging $\bfB$ in $\bfa$, then $\bfb>_\zeta \bfa$
for all $\zeta\in \bfC^\bfa$.

\begin{fact}\label{fact:goodisclub}
\begin{enumerate}
    \item 
    If $\bfa\in \AP_{<\lambda}$, then
    $\bfb\ge_\AP \bfa$ iff  $(\exists \zeta\in\lambda)\,\bfb>_\zeta \bfa$.
\item\label{item:ouhjqw} 
If $\bar\bfa=(\bfa_i)_{i\in J}$ is short, 
then $\{\alpha\in\lambda:\, \alpha\ \mathrm{ good\ for } \ \bar\bfa\}$ is club, more concretely it is $\bigcap_{i<\delta} \bfC^{\bfa_i}\setminus \alpha^*$ for some $\alpha^*<\lambda$.
\end{enumerate}
\end{fact}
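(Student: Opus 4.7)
The plan is, for each ``apart from a bounded set'' clause in Definition~\ref{def:lenew}, to extract an explicit bound below $\lambda$, and then to amalgamate these bounds using the regularity of $\lambda$.

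For~(1), the only nontrivial direction is to produce $\zeta$ from $\bfb\ge_\AP\bfa$. Clause~(1) of Definition~\ref{def:lenew} gives $\alpha_1<\lambda$ above which $\bfC^\bfb\subseteq \bfC^\bfa$; clause~(2) gives $\alpha_2<\lambda$ above which $\bfpi^\bfb$ and $\bfpi^\bfa$ coincide on every $I^\bfa_\veps$ with $\veps\in\bfC^\bfb$; and clause~(3), unpacked via the remark $\bfpi^\bfa[A]=^*\bfpi^\bfb[A]$, gives for each $A\in\bfB^\bfa$ some $\alpha_A<\lambda$ above which the two images coincide. The crucial use of $\bfa\in\AP_{<\lambda}$ is that $|\bfB^\bfa|<\lambda$, so by regularity of $\lambda$ the ordinal $\alpha_3:=\sup_{A\in\bfB^\bfa}\alpha_A$ is below $\lambda$. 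Any $\zeta\in\bfC^\bfa\cap\bfC^\bfb$ above $\max(\alpha_1,\alpha_2,\alpha_3)$ then witnesses $\bfb>_\zeta\bfa$; the converse direction is routine clause-matching.

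For~(2), I would apply the same idea uniformly across the $<\lambda$ many pairs $i\le k$ in $J$. Each such pair yields bounds $\alpha^{(b)}_{ik},\alpha^{(c)}_{ik},\alpha^{(d)}_{ik}<\lambda$ from clauses (b), (c), (d) of goodness; here $\alpha^{(d)}_{ik}$ is itself a supremum over the $<\lambda$ many $A\in\bfB^{\bfa_i}$, legitimate by regularity. Since $|J|^2<\lambda$ and $\lambda$ is regular, the overall supremum $\alpha^*$ of all these bounds stays below $\lambda$. For any $\alpha\in\bigcap_{i\in J}\bfC^{\bfa_i}$ with $\alpha\ge\alpha^*$, all four clauses of ``good'' hold --- clause~(b) being needed to guarantee that the intervals $I^{\bfa_i}_\veps$ appearing in clause~(c) are actually defined --- and conversely clause~(a) of ``good'' forces $\alpha\in\bigcap_{i\in J}\bfC^{\bfa_i}$. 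Since $\lambda$ is inaccessible, the intersection of these $<\lambda$ clubs is itself a club, and deleting the bounded initial segment $\alpha^*$ preserves this.

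No step looks genuinely difficult; the only subtlety is that legitimising the suprema over $\bfB^{\bfa_i}$ demands $|\bfB^{\bfa_i}|<\lambda$, which is exactly the $\AP_{<\lambda}$ hypothesis built into the definition of a short sequence, together with the regularity (indeed inaccessibility) of $\lambda$ allowing both the $\sup$ over $\bfB^{\bfa_i}$ and the outer $\sup$ over the pair-index $J\times J$ to remain below $\lambda$.
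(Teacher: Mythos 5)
Your proof is correct, and it fills in what the paper leaves as an unproved ``Fact.'' The argument is exactly the natural one the definitions call for: for each ``apart from a bounded set'' clause in Definition~\ref{def:lenew} one extracts a bound below $\lambda$, and regularity of $\lambda$ keeps the suprema over $\bfB^{\bfa_i}$ (this is where $\AP_{<\lambda}$ enters) and over the pair-index set $J\times J$ below $\lambda$. One small point worth making explicit: in the converse direction of~(1), clauses (a)--(d) of goodness say nothing about $\bfB^\bfb\supseteq\bfB^\bfa$; this is not a gap, because by Definition~\ref{def:good}(4) the relation $\bfb>_\zeta\bfa$ is only declared when $\langle\bfa,\bfb\rangle$ is short, which already packages $\bfb\ge_\AP\bfa$, so the nontrivial content of~(1) is exactly the direction you prove. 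Your observation that clause~(b) of goodness is what makes the intervals $I^{\bfa_i}_\veps$ in clause~(c) well-defined is also the right thing to flag. Since $\lambda$ is inaccessible (hence regular and uncountable) the intersection of $|J|<\lambda$ clubs is a club, so your characterization $\bigcap_{i\in J}\bfC^{\bfa_i}\setminus\alpha^*$ matches the statement.
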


\begin{lemma}\label{lem:shortlimit}
If $\bar\bfa$ is short, then is has an $\le_\AP$-upper-bound 
$\bfb\in\AP_{<\lambda}$.
\end{lemma}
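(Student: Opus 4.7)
My plan is to build $\bfb$ as a diagonal limit of the sequence. First, invoke Fact~\ref{fact:goodisclub}(\ref{item:ouhjqw}) to fix $\alpha^*<\lambda$ with $\alpha^*$ good for $\bar\bfa$ and $D^*:=\bigcap_{i\in J}\bfC^{\bfa_i}\setminus\alpha^*$ club. Set $\bfC^\bfb:=D^*$ and $\bfB^\bfb:=\bigcup_{i\in J}\bfB^{\bfa_i}$ (of size $\le\lambda$, as $|J|<\lambda$), and define $\bfpi^\bfb$ as the identity on $[0,\min\bfC^\bfb)$, while for $\beta\in I^\bfb_\veps$ with $\veps\in\bfC^\bfb$ put $\bfpi^\bfb(\beta):=\bfpi^{\bfa_{j(\beta)}}(\beta)$, where $j(\beta)\in J$ is minimal with $\beta<\xi^\veps_{j(\beta)}$ and $\xi^\veps_j:=\min(\bfC^{\bfa_j}\setminus(\veps+1))$.

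The key observation making this well-defined is that for consecutive $\veps<\veps'$ in $\bfC^\bfb$, one has $\veps'=\sup_{j\in J}\xi^\veps_j$. Indeed, by Def~\ref{def:good}(\ref{item:epitrjqw}) applied above $\alpha^*$, each $\xi^\veps_j$ lies in $\bfC^{\bfa_i}$ for every $i\le j$ in $J$, so the supremum (attained, or obtained as a limit of such $\xi^\veps_j$'s) lies in every $\bfC^{\bfa_i}$ by closedness; being strictly above $\veps$ and at most $\veps'$, while lying in $\bfC^\bfb=\bigcap_i\bfC^{\bfa_i}$ above $\alpha^*$, it must equal $\veps'$. Hence $j(\beta)$ is defined for every $\beta\in I^\bfb_\veps$. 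The pointwise-compatibility clause of goodness (namely $\bfpi^{\bfa_k}\restriction I^{\bfa_i}_\veps=\bfpi^{\bfa_i}\restriction I^{\bfa_i}_\veps$ for $i\le k$ and $\veps\in\bfC^{\bfa_k}$ above $\alpha^*$) makes the local pieces compatible, and a short check shows $\bfpi^\bfb$ is a permutation of each $I^\bfb_\veps$; so $\bfpi^\bfb\in\Sym(\lambda)$ with $\bfpi^\bfb\restriction\veps\in\Sym(\veps)$ for every $\veps\in\bfC^\bfb$.

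Clauses (1) and (2) of Def~\ref{def:lenew} for $\bfb\ge_\AP\bfa_i$ then follow from the construction and the pointwise-compatibility clause (for $\beta\in I^{\bfa_i}_\eta$ with $\eta\in\bfC^\bfb$ above $\alpha^*$, one has $j(\beta)\le i$, hence $\bfpi^\bfb(\beta)=\bfpi^{\bfa_i}(\beta)$). The main obstacle is clause (3), $\bfpi^\bfb[A]=^*\bfpi^{\bfa_i}[A]$ for $A\in\bfB^{\bfa_i}$. The inclusion ``$\subseteq^*$'' splits on the comparison of $j(\beta)$ and $i$: if $j(\beta)\le i$, pointwise equality $\bfpi^\bfb(\beta)=\bfpi^{\bfa_i}(\beta)$ holds; if $j(\beta)>i$, then $\bfpi^\bfb(\beta)=\bfpi^{\bfa_{j(\beta)}}(\beta)\in\bfpi^{\bfa_{j(\beta)}}[A]=^*\bfpi^{\bfa_i}[A]$ by Def~\ref{def:good}(\ref{item:epitrjqwz}). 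The reverse inclusion is subtler: given $\gamma=\bfpi^{\bfa_i}(\beta)\in\bfpi^{\bfa_i}[A]$ above $\alpha^*$ with $\beta\in I^\bfb_\veps$, if $j(\beta)\le i$ then $\gamma=\bfpi^\bfb(\beta)$; otherwise $\gamma\in I^{\bfa_{j(\beta)}}_\veps$, and Def~\ref{def:good}(\ref{item:epitrjqwz}) yields $\beta'\in A$ with $\bfpi^{\bfa_{j(\beta)}}(\beta')=\gamma$ (so $\beta'\in I^{\bfa_{j(\beta)}}_\veps$, forcing $j(\beta')\le j(\beta)$), and the pointwise-compatibility clause gives $\bfpi^\bfb(\beta')=\gamma$. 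Together with $|\bfB^\bfb|\le\lambda$, this yields $\bfb\in\AP_{<\lambda}$ as required.
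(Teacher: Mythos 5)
Your proof is correct and follows essentially the same route as the paper: intersect the clubs above a good threshold, define $\bfpi^\bfb$ pointwise by the first index at which the value stabilizes (your minimal $j(\beta)$ is exactly the paper's ``$\alpha$ is stable at $i$''), and check the interval-permutation and image-preservation clauses using the goodness conditions. The only cosmetic difference is in the reverse inclusion of clause (3): you pull a preimage $\beta'$ back explicitly via Def.~\ref{def:good}(\ref{item:epitrjqwz}), whereas the paper avoids this by observing that $\bfpi^\bfb\restriction I\in\Sym(I)$ together with $\bfpi^\bfb[I\cap A]\subseteq B$ and $\bfpi^\bfb[I\setminus A]\subseteq I\setminus B$ already forces equality.
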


\begin{proof}
Set $D:=\bigcap_{i\in J} C^{\bfa_i}$, and $\zeta_0$ be the smallest
$\bar\bfa$-good ordinal. So in particular $\zeta_0\in D$; and any $\zeta\ge \zeta_0$ is in $D$ iff it is $\bar\bfa$-good.

Fix for now some $\zeta\in D\setminus \zeta_0$. Let $\zeta^+$ be the $D$-successor of $\zeta$. 

For $i\in J$, set $\gamma(\zeta,i)$ to be the $\zeta$-successor of $C^{\bfa_i}$.
Then the sequence $\gamma(\zeta,i)$  is weakly increasing with $i\in J$ and has limit $\zeta^+$.
If $\alpha<\gamma(\zeta,i)$ (we also say ``$\alpha$ is stable at $i$''), then 
$\bfpi^{\bfa_i}(\alpha)=\bfpi^{\bfa_j}(\alpha)$
for all $j>i$ in $J$. 


We define $\pi^{\mathrm{lim}}(\alpha)$ for all $\alpha\ge \zeta_0$ as 
$\bfpi^{\bfa_i}(\alpha)$ for some $i$ stable for $\alpha$.

\medskip

To summarize: Whenever $I:=\zeta^+\setminus \zeta$ for some $\zeta\in D\setminus\zeta_0$ with $\zeta^+$ the $D$-successor, we get:
\begin{enumerate}
\item\label{item:oinkoink} $(\forall \alpha\in I)\, (\exists i\in J)\, (\forall j>i)\, \pi^{\mathrm{lim}}(\alpha)= \bfpi^{\bfa_i}(\alpha)$. 

    \item\label{item:oueqitjhu3q} $\pi^{\mathrm{lim}}\restriction I\in\Sym(I)$.
    \item\label{item:oueqitjhu3q2} If $i\in J$ and $A\in \bfB^{\bfa_i}$, then
    $\pi^{\mathrm{lim}}[A']=\bfpi^{\bfa_i}[A']$ where $A':=A\cap I$. 
\end{enumerate}

For (\ref{item:oueqitjhu3q}), note that $\bfpi^{\bfa_i}\in\Sym(I)$ for all $i\in J$.
If $\alpha_1\ne \alpha_2\in I$, then there is an $i$ in $J$ stable
for both, and $\bfpi^{\bfa_i}(\alpha_1)
\ne \bfpi^{\bfa_i}(\alpha_2)$. So
$\bfpi^{\mathrm{lim}}$ is injective.
And if $\alpha_1\in I$ and $i$ in $J$ stable for $\alpha_1$, then there
is an $\alpha_2\in I^{\bfa_i}_\zeta$ with $\bfpi^{\mathrm{lim}}(\alpha_2)=\bfpi^{\bfa_i}(\alpha_2)=\alpha_1$,
so $\pi^{\mathrm{lim}}$ is surjective.

For (\ref{item:oueqitjhu3q2}):
    Set $B:=\bfpi^{\bfa_{i}}[A']$.
    As $I$ is above the good $\zeta_0$, we have:
    $B=\bfpi^{\bfa_j}[A']$ for all $j\in J$
    with $j>i$.
    So for $\alpha\in A'$, 
    all $\bfpi^{\bfa_j}(\alpha)$ are in $B$, and 
    also stabilize to $\pi^{\mathrm{lim}}(\alpha)$, which therefore has to be in $B$.
    Analogously, we get: If $\alpha\in I\setminus A$, then 
    $\bfpi^{\bfa_j}(\alpha)\ne B$ stabilizes to 
    $\pi^{\mathrm{lim}}(\alpha)$, which therefore is not in $B$.
    As $\pi^{\mathrm{lim}}[I]=I$, we get $\pi^{\mathrm{lim}}[I\cap A]=B$.

\medskip

We can now define $\bfb$ as:
\[
\bfC^\bfb:=D\setminus\zeta_0
;\quad
\bfpi^\bfb(\alpha)=\begin{cases}
    \alpha&\text{if }\alpha<\zeta_0\\
\pi^\mathrm{lim}(\alpha)&\text{otherwise;}\end{cases}
\quad
\bfB^\bfb:=\bigcup_{i\in J}\bfB^{\bfa_i}.
\qedhere
\]
%
\end{proof}

\section{Initial segments}

We will work with initial segments of approximations (without the $\bfB$ part):
\begin{definition}
\begin{itemize}
    \item 
    An ``initial segment'' 
    $b$ consists of
    a ``height'' $\delta^b$, a closed $C^b\subseteq \delta^b$ (possibly empty), and a $\pi^b\in\Sym(\delta^b)$ such
    that $\pi^b\restriction \zeta\in\Sym(\zeta)$
    for all $\zeta\in C^b$.
\item 
    The set of initial segments is called $\IS$.
\item
    $b>_\IS a$,  if  
    $\delta^b>\delta^a$, $\delta^a\in C^b$, 
    $C^b\cap\delta^a=C^a$, and 
    $\pi^b\restriction\delta^a=\pi^a$.
    \item $b\ge_\IS a$ if $b>_\IS a$ or $b=a$.

    \item
   For $\zeta\in C^b$, we set
$I^b_\zeta:=I^*(C^b\subseteq \delta^b, \zeta)$. 
    \end{itemize}
\end{definition}
So the $I^b_\zeta$ form an increasing interval partition of $\delta^b\setminus \min(C^b)$, and $\pi^b\restriction
I^b_\zeta\in\Sym(I^b_\zeta)$.

$\le_\IS$ is a partial order.

\medskip

Some trivialities:
\begin{fact}\label{fact:istriv1}
    Assume that $\bar b=(b_i)_{i<\xi}$,
    with $\xi\le \lambda$ limit, is an $<_\IS$-increasing sequence.
    \begin{enumerate}
        \item\label{item:ounewgew} 
    If $\xi<\lambda$, then the following
    $b_\xi\in  \IS$ is the    $\le_\IS$-supremum of $\bar b$, and we call it ``the limit'' of $\bar b$:
    $\delta^{b_\xi}:=
    \bigcup_{i<\xi} \delta^{b_i}$,
    $C^{b_\xi}:=
    \bigcup_{i<\xi} C^{b_i}$ and
    $\pi^{b_\xi}:= 
    \bigcup_{i<\xi} \pi^{b_i}$.

        \item \label{item:ounewgewb}
    If $\xi=\lambda$, then
    to each $B\subseteq P(\lambda)$ there is a 
    $\bfb\in\AP$ as follows, which we call
    ``a limit'' of $\bar b$: 
    $\bfC^\bfb:=
    \bigcup_{i<\lambda} C^{b_i}$
    $\bfpi^\bfb:= 
    \bigcup_{i<\lambda} \pi^{b_i}$
    and  $\bfB^\bfb:=B$.
      \end{enumerate}
\end{fact}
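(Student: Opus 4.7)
The plan is to unpack the definitions of $\IS$ and $\AP$ and verify each clause directly from the union, leveraging the three identities packaged into $b_j >_\IS b_i$: namely $\delta^{b_i} \in C^{b_j}$, $C^{b_j} \cap \delta^{b_i} = C^{b_i}$, and $\pi^{b_j} \restriction \delta^{b_i} = \pi^{b_i}$. These make all the proposed unions well defined and automatically compatible with each $b_i$.

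For (\ref{item:ounewgew}), I set $\delta := \bigcup_{i<\xi} \delta^{b_i}$, $C := \bigcup_{i<\xi} C^{b_i}$, $\pi := \bigcup_{i<\xi}\pi^{b_i}$ and check the three requirements. The map $\pi$ is a bijection of $\delta$ onto itself because each $\pi^{b_i}$ is a bijection of $\delta^{b_i}$ onto itself and the maps cohere. To see $C \subseteq \delta$ is closed, pick a limit point $\alpha < \delta$; choose $i < \xi$ with $\alpha < \delta^{b_i}$; then the tail of witnesses below $\delta^{b_i}$ lies in $C \cap \delta^{b_i} = C^{b_i}$, so $\alpha \in C^{b_i} \subseteq C$ by closedness of $C^{b_i}$. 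For $\zeta \in C$, pick $i$ with $\zeta \in C^{b_i}$ and use $\pi \restriction \zeta = \pi^{b_i} \restriction \zeta \in \Sym(\zeta)$. That $b_\xi$ is the $\le_\IS$-supremum is immediate: by construction $b_\xi \ge_\IS b_i$, and any other upper bound must agree with each $b_i$ on $\delta^{b_i}$, hence coincide with $b_\xi$ on $\delta$.

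Case (\ref{item:ounewgewb}) runs the same argument with $\xi = \lambda$, with two small extras. First, $\bigcup_{i<\lambda} \delta^{b_i} = \lambda$: the heights are strictly increasing (by $b_{i+1} >_\IS b_i$), the sequence has length $\lambda$, so regularity of $\lambda$ gives the claim. Second, $\bfC^\bfb$ is unbounded in $\lambda$, since $\delta^{b_i} \in C^{b_{i+1}} \subseteq \bfC^\bfb$ exhibits a cofinal subset. Closedness of $\bfC^\bfb$ and $\bfpi^\bfb \restriction \veps \in \Sym(\veps)$ for $\veps \in \bfC^\bfb$ are verified as in (\ref{item:ounewgew}), and the choice $\bfB^\bfb := B \subseteq P(\lambda)$ trivially satisfies the third bullet of Definition~\ref{def:AP}, so $\bfb \in \AP$.

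Truly the only step that uses more than symbol-pushing is the closedness of $C^{b_\xi}$ (resp.\ $\bfC^\bfb$), where the compatibility $C^{b_j} \cap \delta^{b_i} = C^{b_i}$ does the work — and this is why the fact belongs among the ``trivialities''.
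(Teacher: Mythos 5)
Your proof is correct and is the direct verification the paper intends — the paper labels this a ``triviality'' and gives no written proof, and the argument you give (unions cohere because $b_j >_\IS b_i$ forces $C^{b_j}\cap\delta^{b_i}=C^{b_i}$ and $\pi^{b_j}\restriction\delta^{b_i}=\pi^{b_i}$; closedness of the union club is the only nonformal step; regularity of $\lambda$ gives $\bigcup_{i<\lambda}\delta^{b_i}=\lambda$) is exactly the standard check of Definition~\ref{def:AP} and the definition of $\IS$. Nothing to add.
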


Let us call an $<_\IS$-increasing sequence $\bar b$
``continuous'' if 
$b_\gamma$ is the limit of $(b_\alpha)_{\alpha<\gamma}$
for all limits $\gamma<\delta$.  We will only use continuous sequences.

\begin{definition}\label{def:strongext}
    Let $\bfa\in \AP_{<\lambda}$
    and
    $b\in \IS$ with $\delta^b\in \bfC^\bfa$. 
    We say $c>_{\bfa} b$,
    if the following holds:
    \begin{itemize}
    \item 
    $c>_\IS b$.
    \item
    $(C^{c}\cup \{\delta^{c}\}) \setminus 
    \delta^{b} \subseteq\bfC^\bfa$.
    \item For all 
    $\zeta\in C^{c}\setminus 
    \delta^{b}$,
    $\pi^{c}\restriction I^\bfa_\zeta
    = \bfpi^\bfa \restriction I^\bfa_\zeta$.
    \item For all $A\in \bfB^\bfa$,
    $\pi^{c}[A'] 
    =\bfpi^{\bfa}[A']$
    where we set 
    $A':=A\cap \delta^{c}\setminus \delta^{b}$.
    \end{itemize}
For a short $\bar\bfa$ (with index set $J$) 
we say $c>_{\bar\bfa} b$ if $c>_{\bfa_i} b$ for all $i\in J$.
\end{definition}


\begin{lemma}\label{lemma:tr335}
Let $\bfa,\bfb$ in $\AP_{<\lambda}$ and $c$, $d_i$ ($i<\lambda$) in $\IS$.
\begin{enumerate}
\item\label{item:transe}
$<_\bfa$ is a partial order
\item\label{item:tr334} If $\zeta<\lambda$ and 
    $(d_i)_{i\in\zeta}$ is a  $>_\IS$-increasing sequence
    such that $d_i>_{\bfa}c$ for all $i<\zeta$, then also the limit $d_\zeta$ satisfies
    $d_\zeta>_{\bfa}c$.
\item\label{item:tr331} 
If 
$\bfb >_{\delta^c} \bfa$,
then $d>_{\bfb} c$ implies $d>_{\bfa} c$.
\item\label{item:oldlem}    Assume $\bar c:=(c_i)_{i\in\lambda}$
    is a continuous increasing sequence in $\IS$ such that for some $i_0<\lambda$ we have
    $c_i<_\bfa c_{i+1}$ for all $i>i_0$. 
     
    Then 
    any limit $\bfc\in \AP$ of the $\bar c$ with $\bfB^{\bfc} \supseteq \bfB^{\bfa}$ satisfies 
    $\bfc >_\AP \bfa$. 
    \item\label{item:ISextex}
        Let $\bar \bfa$ be short,   
    $b\in \IS$, $\delta^b$ good for $\bar\bfa$
    and $E\subseteq \lambda$ club.
   
    Then
there is a $c>_{\bar \bfa} b$ with $\delta^c\in E$
and $C^c=C^b\cup\{\delta^b\}$
\end{enumerate}
\end{lemma}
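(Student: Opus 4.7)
The plan is to handle parts (1)--(4) by direct unpacking of Definition \ref{def:strongext}, while part (5) -- the substantive one -- reduces via Lemma \ref{lem:shortlimit} and part (3) to a single ``amalgam'' approximation.

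For (1), given $b<_\bfa c<_\bfa d$, I split at $\delta^c\in\bfC^\bfa$: every interval $I^\bfa_\zeta$ with $\zeta\in C^c\setminus\delta^b$ lies below $\delta^c$, so $\pi^d\restriction\delta^c=\pi^c$ propagates the $c$-condition; for $\zeta\in C^d\setminus\delta^c$ I use the $d$-condition directly, and the set condition on $A\in\bfB^\bfa$ splits at $\delta^c$ analogously. Part (2) is similar: for the limit $d_\xi$ of $(d_i)_{i<\xi}$ and $\zeta'\in C^{d_\xi}$, pick any $d_i$ with $\zeta'\in C^{d_i}$; then $\delta^{d_i}\in\bfC^\bfa$ forces $I^\bfa_{\zeta'}\subseteq\delta^{d_i}$ and the $d_i$-condition suffices. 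Part (4) assembles, for $A\in\bfB^\bfa$, $\bfpi^\bfc[A]\setminus\delta^{c_{i_0+1}}=\bigcup_{i>i_0}\bfpi^{c_{i+1}}[A\cap[\delta^{c_i},\delta^{c_{i+1}})]=\bfpi^\bfa[A]\setminus\delta^{c_{i_0+1}}$ using $c_{i+1}>_\bfa c_i$ at each step. For (3), $\bfb>_{\delta^c}\bfa$ yields $\bfC^\bfb\subseteq\bfC^\bfa$ above $\delta^c$, so each $I^\bfa_\zeta$ with $\zeta\in C^d\setminus\delta^c$ sits inside $I^\bfb_\zeta$; clauses (c) and (\ref{item:epitrjqwz}) of Definition \ref{def:good} then translate the $\bfb$-conditions in $d>_\bfb c$ to the corresponding $\bfa$-conditions, using $\bfB^\bfa\subseteq\bfB^\bfb$.

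For (5), pick an $\le_\AP$-upper bound $\bfa^*\in\AP_{<\lambda}$ of $\bar\bfa$ from Lemma \ref{lem:shortlimit}, so $\bfC^{\bfa^*}=D\setminus\zeta_0$ with $D=\bigcap_{i\in J}\bfC^{\bfa_i}$ and $\bfpi^{\bfa^*}=\pi^{\mathrm{lim}}$ is the $j$-stabilisation built there. Since $\delta^b$ is $\bar\bfa$-good, $\delta^b\in\bfC^{\bfa^*}$. The central claim is that $\bfa^*>_{\delta^b}\bfa_i$ for every $i\in J$: clauses (a) and (\ref{item:epitrjqw}) are immediate; for clause (c), if $\veps\in\bfC^{\bfa^*}$ with $\veps\geq\delta^b$ and $\alpha\in I^{\bfa_i}_\veps$, the $D$-predecessor of $\alpha$ is exactly $\veps$ and every $j\geq i$ in $J$ is stable at $\alpha$, hence $\bfpi^{\bfa^*}(\alpha)=\bfpi^{\bfa_j}(\alpha)=\bfpi^{\bfa_i}(\alpha)$ by $\bar\bfa$-goodness at $\delta^b$; for clause (\ref{item:epitrjqwz}), fix $A\in\bfB^{\bfa_i}$ and $\alpha\in A\setminus\delta^b$, pick $j\geq i$ stable at $\alpha$, and use clause (\ref{item:epitrjqwz}) of $\bar\bfa$-goodness to get $\bfpi^{\bfa^*}(\alpha)=\bfpi^{\bfa_j}(\alpha)\in\bfpi^{\bfa_j}[A]\setminus\delta^b=\bfpi^{\bfa_i}[A]\setminus\delta^b$, the reverse inclusion being symmetric via $(\bfpi^{\bfa^*})^{-1}$. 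With the claim in hand, pick $\delta^c\in E\cap\bfC^{\bfa^*}$ above $\delta^b$ (possible since both sets are club and $\lambda$ is regular, cf.\ Fact \ref{fact:goodisclub}(\ref{item:ouhjqw})), set $C^c:=C^b\cup\{\delta^b\}$, and define $\pi^c:=\pi^b\cup(\bfpi^{\bfa^*}\restriction[\delta^b,\delta^c))$. Then $c>_{\bfa^*}b$ is immediate from the construction, and part (3) applied with $(\bfa,\bfb)=(\bfa_i,\bfa^*)$ yields $c>_{\bfa_i}b$ for each $i\in J$, i.e.\ $c>_{\bar\bfa}b$.

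The hard step is clause (\ref{item:epitrjqwz}) in the central claim $\bfa^*>_{\delta^b}\bfa_i$: pointwise, $\bfpi^{\bfa^*}$ and $\bfpi^{\bfa_i}$ can genuinely differ on $\bfa_i$-intervals whose base $\veps$ lies in $\bfC^{\bfa_i}\setminus D$, so the equality $\bfpi^{\bfa^*}[A]\setminus\delta^b=\bfpi^{\bfa_i}[A]\setminus\delta^b$ is only set-theoretic and rests on chasing the stabilisation defining $\pi^{\mathrm{lim}}$ together with clause (\ref{item:epitrjqwz}) of $\bar\bfa$-goodness at $\delta^b$.
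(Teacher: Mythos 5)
Your proposal is correct and follows essentially the same route as the paper. For parts (1)--(4) the paper's proof is simply ``the rest is straightforward,'' and your unpacking matches what is intended; for the substantive part~(5) the paper also appeals to (the proof of) Lemma~\ref{lem:shortlimit}, picking $\delta^c\in D\cap E\setminus(\delta^b+1)$, $C^c=C^b\cup\{\delta^b\}$, and $\pi^c$ agreeing with $\pi^{\mathrm{lim}}$ on $[\delta^b,\delta^c)$, which is exactly your construction. Your detour through the upper bound $\bfa^*$ and part~(3) is a clean way of organizing the verification the paper calls straightforward; note also that you quietly correct the paper's sloppy ``$\pi^c=\pi^{\mathrm{lim}}\restriction\delta^c$'' to the intended $\pi^b\cup(\pi^{\mathrm{lim}}\restriction[\delta^b,\delta^c))$. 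One small remark: the ``reverse inclusion being symmetric via $(\bfpi^{\bfa^*})^{-1}$'' in your verification of clause~(\ref{item:epitrjqwz}) of the central claim is the one place where a reader should slow down -- the clean argument is the one already in the proof of Lemma~\ref{lem:shortlimit}, namely that for $\alpha\notin A$ the stabilized value also lies outside $\bfpi^{\bfa_i}[A]$, and then $\bfpi^{\bfa^*}$ being a bijection of each $D$-interval forces set equality.
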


\begin{proof}
      For~(\ref{item:ISextex}), use (the proof of) Lemma~\ref{lem:shortlimit}: 
      Pick any $\delta^c\in D\cap E\setminus (\delta^b+1)$ and set
      $C^c=C^b\cup\{\delta^b\}$ and
      $\pi^c=\pi^\mathrm{lim}\restriction\delta^c$.

      The rest is straightforward. 
\end{proof}

    


We now turn to spoiling $(f,A)$:
\begin{definition}
    Given $f\in\Sym(\lambda)$ 
    and
    $A\in[\lambda]^\lambda$, 
    we define $c>^{f,A} b$
    by: 
    $c>_\IS b$, $f\restriction\delta^ {c}\in\Sym(\delta^ {c})$, and there is a $\xi^*\in A\cap \delta^ {c}\setminus \delta^b$ with $f(\xi^*)\ne \pi^{c}(\xi^*)$.

    We write $c>^{f,A}_{\bar\bfa} b$ for: 
    $c>_{\bar\bfa} b\ \&\ c>^{f,A} b$
\end{definition}


\begin{lemma}\label{lem:dideldum}
    Assume 
    $(b_i)_{i\in\lambda}$ is $<_IS$-increasing such that unboundedly often
    $b_{i+1}>^{f,A}b_i$.
    Then for some
    $A'\in  [A]^\lambda$,
    every limit $\bfb$ of $(b_i)_{i\in\lambda}$ with $A'\in \bfB^{\bfb}$
    spoils $(f,A)$.
\end{lemma}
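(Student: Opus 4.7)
The plan is to extract a subset $A'$ of $A$ as a diagonal collection of witnesses to the unboundedly many $>^{f,A}$ steps, and then arrange that $\bfpi^\bfb$ and $f$ send each witness into the same small interval, where by construction the two disagree. Concretely, let $S\subseteq\lambda$ be the unbounded set of $i$ with $b_{i+1}>^{f,A} b_i$, and for each $i\in S$ use the definition of $>^{f,A}$ to pick a witness $\xi^*_i\in A\cap [\delta^{b_i},\delta^{b_{i+1}})$ with $f(\xi^*_i)\neq \pi^{b_{i+1}}(\xi^*_i)$. My candidate will be $A':=\{\xi^*_i:i\in S'\}$ for a suitable unbounded $S'\subseteq S$; since the intervals $I_i:=[\delta^{b_i},\delta^{b_{i+1}})$ are pairwise disjoint, the $\xi^*_i$ are distinct, giving $A'\subseteq A$ with $|A'|=\lambda$.

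The goal is to make $\bfpi^\bfb[A']\cap f[A']$ small (empty, in fact) for any limit $\bfb$ containing $A'$ in $\bfB^\bfb$. For the left-hand side this is easy: by the definition of the limit in Fact~\ref{fact:istriv1}, $\bfpi^\bfb(\xi^*_i)=\pi^{b_{i+1}}(\xi^*_i)$, and since $\delta^{b_i}\in C^{b_{i+1}}$ and $\pi^{b_{i+1}}$ permutes each piece of the partition $\{I^{b_{i+1}}_\zeta:\zeta\in C^{b_{i+1}}\}$, one sees $\pi^{b_{i+1}}(\xi^*_i)\in I_i$ automatically.

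The main (and only real) obstacle is to force $f(\xi^*_i)$ into $I_i$ as well, since the definition of $>^{f,A}$ only guarantees $f(\xi^*_i)<\delta^{b_{i+1}}$. I would handle this by a club-intersection argument: $C_f:=\{\alpha<\lambda:f\restriction\alpha\in\Sym(\alpha)\}$ is a club (since $f\in\Sym(\lambda)$), and by continuity of $\bar b$ the set $\{\delta^{b_i}:i<\lambda\}$ is a club, so $S':=\{i\in S:\delta^{b_i}\in C_f\}$ is still unbounded (both $\delta^{b_i}$ and $\delta^{b_{i+1}}$ then lie in $C_f$, the latter automatically from $i\in S$). For such $i$, $f$ is a permutation of both $\delta^{b_i}$ and $\delta^{b_{i+1}}$, hence of the interval $I_i$, so $f(\xi^*_i)\in I_i$. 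Because the $I_i$ for $i\in S'$ are pairwise disjoint, any common value in $\bfpi^\bfb[A']\cap f[A']$ would have to arise as $\pi^{b_{i+1}}(\xi^*_i)=f(\xi^*_j)$ with $i=j$, contradicting the choice of $\xi^*_i$. Thus the intersection is empty and $\bfb$ spoils $(f,A)$ via $A'$.
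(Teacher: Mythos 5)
Your proof is correct and arrives at the same conclusion (a set $A'\subseteq A$ of witnesses with $f[A']\cap\bfpi^{\bfb}[A']=\emptyset$), but by a genuinely different device than the paper's proof. The paper takes a ``stride-two'' subsequence (so that every odd index is an $>^{f,A}$-step), picks witnesses $\xi_i$, and then shows directly by ordinal comparison that for odd $i<j$ one has $f(\xi_j)\ge\delta^{b_{i+1}}>\bfpi^{\bfb}(\xi_i)$, while for odd $j<k$ one has $f(\xi_j)<\delta^{b_{j+1}}\le\bfpi^{\bfb}(\xi_k)$; so $f(\xi_j)$ avoids all $\bfpi^{\bfb}(\xi_i)$. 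You instead thin the index set to $S'=\{i\in S:\delta^{b_i}\in C_f\}$ so that $f$, like $\bfpi^{\bfb}$, \emph{permutes} each block $I_i=[\delta^{b_i},\delta^{b_{i+1}})$; then both images live in the disjoint blocks $I_i$ and the only possible collision $\pi^{b_{i+1}}(\xi^*_i)=f(\xi^*_i)$ is forbidden by the choice of witness. Your version is conceptually cleaner (both maps respect the same interval partition, so disjointness is immediate), but it costs an extra hypothesis: you need $\{\delta^{b_i}:i<\lambda\}$ to be closed, i.e.\ you use continuity of $\bar b$ to conclude that $\{i:\delta^{b_i}\in C_f\}$ is club. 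The lemma as stated does not require $\bar b$ to be continuous, and the paper's proof indeed uses only unboundedness of $S$; without continuity your intersection with $C_f$ could in principle be bounded (e.g.\ if every $\delta^{b_i}$ were a successor ordinal just above a point of $C_f$). In practice this is harmless, since the paper announces right after Fact~\ref{fact:istriv1} that it will only use continuous sequences, and all applications of the lemma obey this; still, you should either explicitly add the continuity hypothesis, or observe that the definition of $>^{f,A}$ already puts $\delta^{b_{i+1}}\in C_f$ for every $i\in S$, so that replacing $I_i$ by the blocks $[\delta^{b_{i+1}},\delta^{b_{j+1}})$ for consecutive $i<j$ in $S$ gives an analogous argument without invoking any club intersection.
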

\begin{proof}
    By taking a subsequence, we can assume that for all odd $i$
    (i.e., $i=\delta+2n+1$ with
    $\delta$ limit or 0 and $n\in\omega$)
    $b_{i+1}>^{f,A} b_{i}$.


    For $i$ odd, set $I_i:=\delta^{b_{i+1}}\setminus \delta^{b_i}$ and let 
    $\xi_i\in I_i$ satisfy 
    $f(\xi_i)\ne \pi^{b_{i+1}}(\xi_i)=\bfpi^{\bfb}(\xi_i)$.

    If $i$ is odd, then 
    $\bfpi^\bfb\restriction I_{i}\in\Sym(I_{i})$ and $f\restriction \delta^{b_{i+1}}\in \Sym(\delta^{b_{i+1}})$.
    
    So if $i<j$ are both odd, then 
    $f(\zeta_{j})>\delta^{b_{i+1}}>\bfpi^\bfb(\zeta_{i})$; and if $j<k$ are both odd
    then $f(\zeta_{j})<\delta^{b_{j}}\le \bfpi^\bfb(\zeta_{k})$.
    This means that $f(\zeta_{j})$ is different
    to all $\bfpi^\bfb(\zeta_{i})$ for $i$ odd.
    
    So we can set 
    $A'=\{\zeta_{j}:\, j\text{ odd}\}$
    and get that $f[A']$ is disjoint to 
    $\bfpi^{\bfb}[A']$. So $\bfb$ with $A'$ added to $\bfB$ spoils $(f,A)$.
\end{proof}

\begin{lemma}\label{lem:purespoil}
    If $\bar\bfa$ is short, $b\in\IS$, 
    $\delta^b$ good for $\bar\bfa$,
    $f\in\Sym(\lambda)$ 
    and
    $A\in[\lambda]^\lambda$, 
    then there is some 
    $d>_{\bar \bfa}^{f,A}b$.
\end{lemma}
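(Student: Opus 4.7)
The strategy is to take the canonical ``limit permutation'' $\pi^{\mathrm{lim}}$ supplied by the construction in the proof of Lemma~\ref{lem:shortlimit} (using the good ordinal $\delta^b$ as the base) and use it, possibly after a single transposition, to define $\pi^d$. Since $\pi^{\mathrm{lim}}$ already satisfies the interval- and set-compatibility requirements for every $\bfa_i$, any initial segment cut from it and extending $\pi^b$ yields $d>_{\bar\bfa}b$ (with the minimal choice $C^d=C^b\cup\{\delta^b\}$), provided $\delta^d$ is chosen both in the club $D:=\bigcap_{i\in J}\bfC^{\bfa_i}$ of good ordinals and in the club of $f$-closure points so that $f\restriction\delta^d\in\Sym(\delta^d)$.

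The remaining issue is to arrange a witness $\xi^*\in A\cap[\delta^b,\delta^d)$ with $f(\xi^*)\ne\pi^d(\xi^*)$, which cannot be guaranteed by $\pi^{\mathrm{lim}}$ alone if $f$ happens to agree with $\pi^{\mathrm{lim}}$ on all of $A$ above $\delta^b$. The remedy is a pigeonhole argument isolating a ``free'' region in $A$. Let $\bfB:=\bigcup_{i\in J}\bfB^{\bfa_i}$; this has size $<\lambda$, so the partition of $\lambda$ into $\bfB$-atoms has fewer than $\lambda$ blocks (using that $\lambda$ is strong limit), and since $|A|=\lambda$ some atom $\sigma$ satisfies $|\sigma\cap A|=\lambda$. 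Set $T:=\sigma\cap A$; any two elements of $T$ are $\bfB$-indistinguishable. Also set $\xi^{**}:=\sup_{i\in J}\xi_i$, where $\xi_i$ denotes the $\bfC^{\bfa_i}$-successor of $\delta^b$; since $|J|<\cf(\lambda)=\lambda$ we have $\xi^{**}<\lambda$, and above $\xi^{**}$ every point lies outside all intervals $I^{\bfa_i}_{\delta^b}$.

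Now I pick distinct $\xi^*,\eta^*\in T$ with $\xi^*,\eta^*>\xi^{**}$ (possible because $T$ is cofinal in $\lambda$), and pick $\delta^d\in D$ above $\xi^*,\eta^*,f(\xi^*),\pi^{\mathrm{lim}}(\xi^*),\pi^{\mathrm{lim}}(\eta^*)$ and in the $f$-closure club. Define $\pi^d$ to extend $\pi^b$ by $\pi^{\mathrm{lim}}\restriction[\delta^b,\delta^d)$, with the values at $\xi^*$ and $\eta^*$ transposed iff $f(\xi^*)=\pi^{\mathrm{lim}}(\xi^*)$. Then $\pi^d(\xi^*)\ne f(\xi^*)$ in both cases (in the swap case using injectivity of $\pi^{\mathrm{lim}}$), so $\xi^*$ witnesses $d>^{f,A}b$. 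The transposition is invisible to the interval condition (it lives above $\xi^{**}$) and to the set condition (because $\xi^*,\eta^*$ are $\bfB$-indistinguishable), so $d>_{\bar\bfa}b$ is preserved. The main obstacle is precisely the case $f=\pi^{\mathrm{lim}}$ on $A\cap[\delta^b,\lambda)$; the pigeonhole producing an atom of size $\lambda$ inside $A$ — together with the observation that the interval constraint only pins down $\pi^d$ on the short initial segment $[\delta^b,\xi^{**})$ — is the decisive step enabling the harmless swap.
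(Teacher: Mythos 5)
Your proof is correct and takes essentially the same route as the paper: use the limit permutation from Lemma~\ref{lem:shortlimit} to get a $>_{\bar\bfa}$-extension, a pigeonhole on the Boolean combinations of $\bigcup_{i\in J}\bfB^{\bfa_i}$ to locate two $\bfB$-indistinguishable points of $A$ above the crucial boundary $\sup_{i\in J}\xi_i$, and a harmless transposition if $f$ happens to agree with the limit permutation at $\xi^*$. The paper phrases the pigeonhole locally (choosing $\zeta_1$ with $|A\cap(\zeta_1\setminus\zeta_0)|>|2^\bfB|$ and finding two equivalent points in that window) rather than globally via a $\lambda$-sized atom, but this is only a cosmetic difference.
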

\begin{proof}
    Let $\bfB:=\bigcup_{i\in J}\bfB^{\bfa_{i}}$.
    Let $\zeta_0<\lambda$ be the supremum of all $\bfC^{\bfa_i}$-successors
    of $\delta^b$.

    Set $E:=\{\zeta\in\lambda:\, f\restriction\zeta\in\Sym(\zeta)\}$ (a club-set).
    Pick $\zeta_1\in E$ such that $|A\cap (\zeta_1\setminus  \zeta_0) |> |2^\bfB|$.
    Pick  $c>_{\bar\bfa} b$ with $\delta^{c}\in E\setminus\zeta_1$
    and such that $C^c=C^b\cup\{\delta^b\}$.

    Set $I:=\delta^{c}\setminus \zeta_0$.
    For $\alpha,\beta$ in $I\cap A$ set 
    $\alpha\sim  \beta$ iff 
    $(\forall A\in \bfB)\, (\alpha\in A\leftrightarrow \beta\in A)$.
    As there are at most $|2^{\bfB}|$ many equivalence classes, there have to be
    $\beta_0\ne\beta_1$ in $I\cap A$ with $\beta_0\sim\beta_1$. 

    If $\pi^c(\beta_i)\ne f(\beta_i)$ for $i=0$ or $i=1$, set $d:=c$.
    Otherwise, defines $d$ as follows: $\delta^d=\delta^c$, $C^d=C^c$,
    and $\pi^d(\alpha):=\begin{cases}
        \pi^c(\beta_1)&\text{if }\alpha=\beta_0,\\
        \pi^c(\beta_0)&\text{if }\alpha=\beta_1,\\
        \pi^c(\alpha) & \text{otherwise.}
    \end{cases}$
    
    Set $I:=\delta^d\setminus\delta^b$.
    As $\beta_0\sim\beta_1$ we have $\pi^d[A\cap I]=\pi^c[A\cap I]=\bfpi^{\bfa_i}[A\cap I]$ for all $i\in J$ and $A\in\bfB^{\bfa_i}$
    (as $c>_{\bar \bfa} b$).
    
    And as the $\beta_0,\beta_1$ are above $\zeta_0$, and $I_{\delta^b}^{\bfa_i}$ 
    is below $\zeta_0$ for all $i\in J$,
    we have
    $\pi^d\restriction I_{\delta^b}^{\bfa_i}=\pi^c\restriction I_{\delta^b}^{\bfa_i}=\bfpi^{\bfa_i}\restriction I_{\delta^b}^{\bfa_i}$.

    So $d>_{\bar \bfa }b$.
\end{proof}

\section{\texorpdfstring{$2^\lambda=\lambda^+$}{2 to the lambda equals lambda plus} for \texorpdfstring{$\lambda$}{lambda} inaccessible implies a nowhere trivial automorphism}

\begin{lemma}\label{lem:iteration2}
    Every increasing sequence in $\APp$ of length
    ${<}\lambda^+$ has an upper bound.
\end{lemma}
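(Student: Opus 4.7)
The plan is to use a slicing trick to reduce the problem to the short-sequence machinery already developed. If $\delta=\xi+1$ is a successor then $\bfa_\xi$ itself is the upper bound; so I assume $\delta$ is a limit, set $\mu:=\cf(\delta)\le\lambda$, and fix a cofinal strictly increasing $(\xi_\gamma)_{\gamma<\mu}$ in $\delta$. It suffices to bound the subsequence $(\bfa_{\xi_\gamma})_{\gamma<\mu}$. Let $\bfB:=\bigcup_{\gamma<\mu}\bfB^{\bfa_{\xi_\gamma}}$, of cardinality $\le\lambda$, and enumerate it as $(A_\beta)_{\beta<\lambda^*}$ with $\lambda^*\le\lambda$. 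For $\gamma<\mu$ and $\rho<\lambda$, let $\bfa_\gamma^{[\rho]}$ denote the approximation obtained from $\bfa_{\xi_\gamma}$ by replacing its $\bfB$-component with $\bfB^{\bfa_{\xi_\gamma}}\cap\{A_\beta:\beta<\rho\}$. Each such slice lies in $\AP_{<\lambda}$, and since slicing leaves $\bfC$ and $\bfpi$ untouched, the $\le_\AP$-order is preserved in both indices.

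I will then build a continuous $<_\IS$-increasing sequence $(c_\rho)_{\rho<\lambda}$ in $\IS$ and take a limit $\bfb$ as in Fact~\ref{fact:istriv1}(\ref{item:ounewgewb}) with $\bfB^\bfb:=\bfB$; this forces $\bfb\in\APp$. Fix a weakly increasing bookkeeping $f:\lambda\to\mu$ with $f(\rho)\to\mu$. At stage $\rho+1$ the sequence $\bar\bfa_\rho:=(\bfa_{\gamma'}^{[\rho]})_{\gamma'\le f(\rho)}$ is short (each entry in $\AP_{<\lambda}$, $\le_\AP$-increasing, index set of size $<\lambda$), so Fact~\ref{fact:goodisclub}(\ref{item:ouhjqw}) yields a club $G_\rho$ of good ordinals. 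Let $\Delta:=\{\nu<\lambda:\nu\in G_\rho\text{ for all }\rho<\nu\}$, a club by diagonal intersection, and arrange the construction so that $\delta^{c_\rho}\in\Delta$ and $\delta^{c_\rho}>\rho$ for every $\rho$. At successor stages apply Lemma~\ref{lemma:tr335}(\ref{item:ISextex}) with $E:=\Delta\setminus(\rho+1)$ to obtain $c_{\rho+1}>_{\bar\bfa_\rho}c_\rho$ with $\delta^{c_{\rho+1}}\in E$; at limit stages take the continuous $\IS$-limit (Fact~\ref{fact:istriv1}(\ref{item:ounewgew})), and closure of $\Delta$ keeps $\delta^{c_\rho}\in\Delta$.

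For the verification, fix $\gamma<\mu$ and $\beta<\lambda^*$ with $A_\beta\in\bfB^{\bfa_{\xi_\gamma}}$, and let $\rho_0<\lambda$ be so large that $\beta<\rho_0$ and $f(\rho)\ge\gamma$ for all $\rho\ge\rho_0$. For $\rho\ge\rho_0$ the slice $\bfa_\gamma^{[\rho]}$ belongs to $\bar\bfa_\rho$, so $c_\rho<_{\bfa_\gamma^{[\rho]}}c_{\rho+1}$; since $\bfa_\gamma^{[\rho]}\ge_\AP\bfa_\gamma^{[\rho_0]}$ share $\bfC$ and $\bfpi$, $\delta^{c_\rho}\in\bfC^{\bfa_{\xi_\gamma}}$ is trivially good for the pair $(\bfa_\gamma^{[\rho_0]},\bfa_\gamma^{[\rho]})$ in the sense of Definition~\ref{def:good}, so Lemma~\ref{lemma:tr335}(\ref{item:tr331}) gives $c_\rho<_{\bfa_\gamma^{[\rho_0]}}c_{\rho+1}$. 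Now Lemma~\ref{lemma:tr335}(\ref{item:oldlem}), applied with $\bfa:=\bfa_\gamma^{[\rho_0]}\in\AP_{<\lambda}$, yields $\bfb\ge_\AP\bfa_\gamma^{[\rho_0]}$, in particular $\bfpi^\bfb[A_\beta]=^*\bfpi^{\bfa_{\xi_\gamma}}[A_\beta]$. Since the $\bfC$- and $\bfpi$-parts of the $\le_\AP$-conditions depend only on $\bfC^{\bfa_\gamma^{[\rho_0]}}=\bfC^{\bfa_{\xi_\gamma}}$ and $\bfpi^{\bfa_\gamma^{[\rho_0]}}=\bfpi^{\bfa_{\xi_\gamma}}$, and $\beta$ was arbitrary, we obtain $\bfb\ge_\AP\bfa_{\xi_\gamma}$ for every $\gamma<\mu$, hence $\bfb\ge_\AP\bfa_\xi$ for every $\xi<\delta$.

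The main obstacle I anticipate is the interplay between the bookkeeping and the diagonal intersection: the good-sets $G_\rho$ shrink as $\rho$ grows and more slices are accommodated, and $\Delta$ together with a modestly strict growth of $\delta^{c_\rho}$ must conspire so that the goodness of $\delta^{c_\rho}$ for $\bar\bfa_\rho$ survives every limit stage. The slicing is the key device that decouples the $\bfB$-growth of the chain from the short-sequence hypothesis required by Fact~\ref{fact:goodisclub} and Lemma~\ref{lemma:tr335}(\ref{item:ISextex}); getting the diagonal argument to mesh with the bookkeeping without ever starving the construction is the delicate step.
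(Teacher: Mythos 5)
Your proof is essentially the same as the paper's: both reduce to a sequence of length $\le\lambda$, slice the $\bfB$-parts so that the intermediate sequences are short, build a continuous $<_\IS$-increasing sequence of height $\lambda$ whose trunks have good heights and are successively $>_{\bar\bfa}$-related, take the limit as in Fact~\ref{fact:istriv1}(\ref{item:ounewgewb}), and invoke Lemma~\ref{lemma:tr335}(\ref{item:tr331}) and~(\ref{item:oldlem}) for the verification. The cosmetic differences are: the paper uses a single doubly-indexed enumeration $x_i^j$ with $B_i^j$ increasing in $i$ and takes $\bar\bfa^\ell=(\bfa_k^\ell)_{k<\min(\ell,\xi)}$, whereas you enumerate the union $\bfB$ once and control the first index by a bookkeeping function $f$; and the paper asserts at limit stages that the height is good ``by induction'', whereas you make this explicit via the diagonal intersection $\Delta$. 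One small point you have (correctly) flagged as delicate but not fully closed: at a limit stage $\rho$ you need $\delta^{c_\rho}$ to be $\bar\bfa_\rho$-good. Membership in $\Delta$ only guarantees $\delta^{c_\rho}\in G_{\rho'}$ for $\rho'<\delta^{c_\rho}$, so if $\delta^{c_\rho}=\rho$ and $f$ jumps at $\rho$, goodness for the new indices $\gamma'\le f(\rho)$ is not covered. This is easily repaired by choosing $f$ \emph{continuous} (which together with the continuity of the slicing $\{A_\beta:\beta<\rho\}$ makes $\bar\bfa_\rho$ the ``limit'' of the $\bar\bfa_{\rho'}$ in the relevant sense), or by pushing $\delta^{c_{\rho+1}}$ strictly past $\rho+1$ to guarantee $\delta^{c_\rho}>\rho$ at all stages; either fix is routine and the rest of the argument is correct.
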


\begin{proof}
  We can assume without loss of generality that the increasing sequence  is
  $\bar a:=(\bfa_i)_{i\in \xi }$ with  $\xi \le\lambda$.

  For $i<\xi $,
  enumerate\footnote{with lots of repetitions} 
  $\bfB^{\bfa_i}$ as $\{x_i^j:\, j\le \lambda\}$,
  and set $B^j_i:=\{x_i^k:\, k\le j\}$
  for $j<\lambda$.
  We enumerate in a way so that
  the $B^j_i$ are increasing with $i<\xi $.
  Let $\bfa_i^j$ be $\bfa_i$ with $\bfB$ replaced by $B^j_i$,
  and for $\ell<\lambda$ set $\bar \bfa^\ell:=(\bfa^\ell_k)_{k<\min(\ell,\xi )}$.
  Note that $\bar \bfa^\ell$ is short.
  
  $\bfc\in\AP$ is an upper bound of $\bar\bfa$
  iff it is an upper bound of all 
  $\bfa^\ell_k$ for $\ell< \lambda$ and $k<\min(\ell,\xi )$.

  

  We now construct by induction on $\ell<\lambda$ a $<_\IS$-increasing 
  continuous sequence 
  $(c^\ell)_{\ell\in\lambda}$, such that $\delta^{c^\ell}$ is $\bar\bfa^\ell$-good:
  
  \begin{itemize}
      \item At limits $\gamma$ we let $c^\gamma$ be the limit of the $(c^k)_{k<\gamma}$, and note that (by induction) its height it is $\bar\bfa^\gamma$-good.
  \item For $j=\ell+1$, let 
  $E$ be the club set of $\bar\bfa^{\ell+1}$-good ordinals, and choose, as in Lemma~\ref{lemma:tr335}(\ref{item:ISextex})
  $c^{\ell+1}>_{\bar \bfa^\ell} c^\ell$ with $\delta^{c^{\ell+1}}\in E$.
  \end{itemize}
  Let $\bfc$ be the limit of the $c^\ell$ with $\bfB^{\bfc}:=\bigcup_{i< \xi }\bfB^{\bfa_i}$. 
  
  We claim that $\bfc\ge_\AP \bfa^\ell_j$ for 
  all $\ell< \lambda$ and $j<\min(\ell,\xi)$. 
  Assume that $k>\max(i,j)$.
  \begin{itemize}
      \item By Lemma~\ref{lemma:tr335}(\ref{item:tr331}):
      
      $\delta^{c^k}$ (which is $\bar\bfa^k$-good
      and so, by definition, 
      $\bfa^k_j$-good) is 
      $\bfa^\ell_j$-good, 
      as $\bfa^k_j>_{\delta^{c^k}} \bfa^\ell_j$.
      
      Also, $c^{k+1}>_{\bar \bfa^k} c^k$, so
      (by definition)  
      $c^{k+1}>_{\bfa^k_j} c^k$,
      and 
      so $c^{k+1}>_{\bfa^\ell_j} c^k$.
      \item By Lemma~\ref{lemma:tr335}(\ref{item:oldlem}) we get $\bfc>_\AP \bfa^\ell_j$, as required.\qedhere
  \end{itemize}
\end{proof}

\begin{lemma}\label{lem:spoilsport}
    Given $\bfa\in\APp$, $f\in\Sym(\lambda)$ and $A\in[\lambda]^\lambda$, there is a $\bfb\ge_\AP\bfa$ which is in $\APp$
    and spoils $(f,A)$.
\end{lemma}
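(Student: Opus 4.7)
The plan is to build a continuous, $<_\IS$-increasing sequence $(c_i)_{i<\lambda}$ of initial segments so that $c_{i+1}>^{f,A}_{\bfa} c_i$ for every $i$, and then take a limit in $\AP$ whose $\bfB$-component has been enlarged by a single set $A'\in [A]^\lambda$ chosen via Lemma~\ref{lem:dideldum}.

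First I regard $\langle\bfa\rangle$ as a (one-element) short sequence in the sense of Definition~\ref{def:good}, so that ``$\alpha$ is good for $\langle\bfa\rangle$'' reduces to ``$\alpha\in\bfC^\bfa$''. I start with some $c_0\in\IS$ of height $\delta^{c_0}\in\bfC^\bfa$ (e.g.\ $C^{c_0}:=\bfC^\bfa\cap\delta^{c_0}$ and $\pi^{c_0}:=\bfpi^\bfa\restriction\delta^{c_0}$). At successor stages, assuming $\delta^{c_i}\in\bfC^\bfa$, I invoke Lemma~\ref{lem:purespoil} to produce $c_{i+1}>^{f,A}_{\bfa}c_i$; by Definition~\ref{def:strongext} this automatically gives $\delta^{c_{i+1}}\in\bfC^\bfa$. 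At limit stages $\gamma<\lambda$ I take $c_\gamma$ to be the $\IS$-limit supplied by Fact~\ref{fact:istriv1}(\ref{item:ounewgew}); since $\bfC^\bfa$ is club and contains each $\delta^{c_i}$ for $i<\gamma$, it also contains $\delta^{c_\gamma}=\sup_{i<\gamma}\delta^{c_i}$, so the induction continues.

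Once the sequence is built, Lemma~\ref{lem:dideldum} produces an $A'\in [A]^\lambda$ such that every limit $\bfc$ of $(c_i)_{i<\lambda}$ with $A'\in \bfB^{\bfc}$ spoils $(f,A)$. I define $\bfb$ to be the limit given by Fact~\ref{fact:istriv1}(\ref{item:ounewgewb}) with $\bfB^\bfb:=\bfB^\bfa\cup\{A'\}$. Then $\bfb$ spoils $(f,A)$, and $|\bfB^\bfb|\le|\bfB^\bfa|+1\le\lambda$, so $\bfb\in\APp$. Finally, $\bfb\ge_\AP \bfa$ follows from Lemma~\ref{lemma:tr335}(\ref{item:oldlem}) applied to the continuous sequence $(c_i)_i$ and $\bfa$, since by construction $c_i<_{\bfa} c_{i+1}$ for all $i$ (the relation $>^{f,A}_\bfa$ strengthens $>_\bfa$) and $\bfB^\bfb\supseteq\bfB^\bfa$.

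There is really no serious obstacle: all of the nontrivial content has been packaged into the earlier lemmas. The only bookkeeping point is maintaining $\delta^{c_i}\in\bfC^\bfa$ across the induction, which is handled automatically (by Definition~\ref{def:strongext} at successors and by the club property of $\bfC^\bfa$ at limits). The step I would double-check most carefully is the invocation of Lemma~\ref{lem:dideldum}: it requires $c_{i+1}>^{f,A} c_i$ unboundedly often, which my construction provides at \emph{every} successor stage.
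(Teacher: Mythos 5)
Your outline misses the crucial distinction between $\APp$ (the hypothesis: $|\bfB^\bfa|\le\lambda$) and $\AP_{<\lambda}$ ($|\bfB^\bfa|<\lambda$). You immediately ``regard $\langle\bfa\rangle$ as a one-element short sequence,'' but Definition~\ref{def:good} requires each member of a short sequence to lie in $\AP_{<\lambda}$. If $|\bfB^\bfa|=\lambda$, $\langle\bfa\rangle$ is simply not short, and every lemma you cite afterwards breaks: $c>_\bfa b$ is only defined for $\bfa\in\AP_{<\lambda}$ (Definition~\ref{def:strongext}); Lemma~\ref{lem:purespoil} demands a short $\bar\bfa$, and its proof genuinely uses it --- it picks $\zeta_1<\lambda$ with $|A\cap(\zeta_1\setminus\zeta_0)|>|2^\bfB|$ where $\bfB=\bigcup_{i\in J}\bfB^{\bfa_i}$, which is impossible when $|\bfB|=\lambda$ (then $|2^\bfB|=2^\lambda>\lambda$); and Lemma~\ref{lemma:tr335}(\ref{item:oldlem}) is stated only for $\bfa\in\AP_{<\lambda}$. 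So your construction can fail at the very first successor step.

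The missing ingredient is exactly the paper's enumeration trick: fix an enumeration $\bfB^\bfa=\{x^j:j<\lambda\}$, let $\bfa^j$ be $\bfa$ with $\bfB$ replaced by $\{x^i:i<j\}$ (so $\bfa^j\in\AP_{<\lambda}$), and build the continuous $\IS$-sequence $(b^i)_{i<\lambda}$ so that at step $i$ you only ask for $b^{i+1}>^{f,A}_{\bfa^i}b^i$ with $\delta^{b^i}$ good for $\bfa^i$. Each step uses only the small approximation $\bfa^i$, so Lemma~\ref{lem:purespoil} applies, and in the limit Lemma~\ref{lemma:tr335}(\ref{item:oldlem}) gives $\bfb>_\AP\bfa^j$ for every $j<\lambda$, hence $\bfb\ge_\AP\bfa$. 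Your use of Lemma~\ref{lem:dideldum} to extract $A'$ and your definition of $\bfB^\bfb$ are fine; the gap is purely in the size bookkeeping. (If $|\bfB^\bfa|<\lambda$ happened to hold, your proof would be correct as written --- but that is not the hypothesis.)
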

    
\begin{proof}
Enumerate $\bfB^\bfa$ as $\{x^j:\, j\in\lambda\}$
and let $\bfa^j$ be $\bfa$ with $\bfB$ replaced
by $\{x^i:\, i<j\}$. So $\bfa^j\in \AP_{<\lambda}$.
    We construct a continuous increasing sequence
    $b^i$ ($i<\lambda$) in $\IS$ such that 
    $\delta^{b^i}$ is $\bfa^i$-good:
    Given $b^{i}$, we find
$b^{i+1}>^{f,A}_{\bfa^i} b^i$ as in Lemma~\ref{lem:purespoil}.
Let $\bfb$ be the limit of the $b^{i}$
with $\bfB^\bfb=\bfB^\bfa\cup\{A'\}$ as in Lemma~\ref{lem:dideldum}.

And $\bfb>_AP \bfa^j$ for all $j<\lambda$ and therefore 
$\bfb>_\AP \bfa$.
\end{proof}

We can now easily show:
\begin{theorem}\label{thm:a}
\thetext
\end{theorem}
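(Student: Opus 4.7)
The plan is to build, by transfinite recursion of length $\lambda^+$, a $\le_\AP$-increasing chain $(\bfa_\alpha)_{\alpha<\lambda^+}$ in $\APp$ and then read off the desired automorphism from Fact~\ref{fact:central}. The hypothesis $2^\lambda=\lambda^+$ enters only through counting: it gives $\lambda^+$ many subsets of $\lambda$ and $\lambda^+$ many permutations, hence $\lambda^+$ bookkeeping tasks to discharge.

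Fix enumerations $\mathcal P(\lambda)=\{X_\alpha:\alpha<\lambda^+\}=\{Y_\alpha:\alpha<\lambda^+\}$ and $\Sym(\lambda)\times[\lambda]^\lambda=\{(f_\alpha,A_\alpha):\alpha<\lambda^+\}$, and via a standard bookkeeping map split $\lambda^+$ into three cofinal sets on which to schedule the three kinds of tasks, so that every $X_\beta$, every $Y_\beta$, and every pair $(f_\beta,A_\beta)$ is eventually handled. Start with $\bfa_0$ given by $\bfpi^{\bfa_0}=\mathrm{id}_\lambda$, $\bfC^{\bfa_0}=\lambda$, $\bfB^{\bfa_0}=\emptyset$. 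At a successor $\alpha+1$ I would carry out the scheduled task as follows: if it is the $X_\beta$-task, let $\bfa_{\alpha+1}$ be $\bfa_\alpha$ with $X_\beta$ adjoined to $\bfB$ (legitimate by the paragraph preceding Fact~\ref{fact24}), forcing $[X_\beta]\in\bftB^{\bfa_{\alpha+1}}$; if it is the $Y_\beta$-task, adjoin $Z:=(\bfpi^{\bfa_\alpha})^{-1}[Y_\beta]$ instead, so that $\btpi^{\bfa_{\alpha+1}}([Z])=[Y_\beta]$ and by Definition~\ref{def:lenew}(3) this identity is preserved along the rest of the chain; if it is the $(f_\beta,A_\beta)$-task, invoke Lemma~\ref{lem:spoilsport} to obtain an extension in $\APp$ which spoils that pair. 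At a limit stage $\alpha<\lambda^+$, Lemma~\ref{lem:iteration2} supplies an $\le_\AP$-upper bound which again lies in $\APp$.

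Passing to $\btphi:=\bigcup_{\alpha<\lambda^+}\btpi^{\bfa_\alpha}$, the three enumeration requirements of Fact~\ref{fact:central} are automatically met: $\bigcup_\alpha\bftB^{\bfa_\alpha}=P(\lambda)/[\lambda]^{<\lambda}$ by the $X_\beta$-tasks, $\bigcup_\alpha\btpi^{\bfa_\alpha}[\bftB^{\bfa_\alpha}]=P(\lambda)/[\lambda]^{<\lambda}$ by the $Y_\beta$-tasks, and every $(f,A)$ is spoiled by some $\bfa_\alpha$ from the third family of tasks. Fact~\ref{fact:central} then delivers a nowhere trivial Boolean algebra automorphism of $P(\lambda)/[\lambda]^{<\lambda}$, proving the theorem.

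At this point in the paper there is no real obstacle: all the substance has been packaged into Lemma~\ref{lem:iteration2} (taking upper bounds of ${<}\lambda^+$-chains via the short-sequence and initial-segment machinery) and Lemmas~\ref{lem:purespoil}/\ref{lem:spoilsport} (arranging a single spoil without disturbing the approximation). The only point deserving explicit mention is that task (ii) really does pin $[Y_\beta]$ in the range of the eventual automorphism — which is exactly what clause~(3) of Definition~\ref{def:lenew} asserts, so $\btpi^{\bfa_\gamma}([Z])=\btpi^{\bfa_{\alpha+1}}([Z])=[Y_\beta]$ for every $\gamma\ge\alpha+1$.
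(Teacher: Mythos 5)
Your construction is correct and follows the paper's own argument almost verbatim: a $\lambda^+$-chain in $\APp$ built from Lemma~\ref{lem:iteration2} at limits, Lemma~\ref{lem:spoilsport} for spoiling, and bookkeeping to ensure surjectivity/domain coverage, finishing with Fact~\ref{fact:central}. The only cosmetic difference is that you schedule the domain task $X_\beta$ and the range task $(\bfpi^{\bfa_\alpha})^{-1}[Y_\beta]$ as two separate kinds of bookkeeping steps, whereas the paper adds $X_j$ and $(\bfpi^{\bfa_j})^{-1}[X_j]$ together at each odd successor; this is immaterial.
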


\begin{proof}
    
We construct, by induction on $i\in\lambda^+$, an increasing chain
of $\bfa_i$ in $\APp$, such that:
\begin{itemize}
    \item For limit $i$, we take limits according to Lemma~\ref{lem:iteration2}.
    \item\label{item:zwei} For odd successors $i=j+1=\delta+2n+1$ ($\delta$ limit, $n\in\omega$),
     pick by bookkeeping some $X_j$ and 
     let $\bfa_{j+1}$ be the same as $\bfa_j$ but with 
     $X_j$ and $(\bfpi^{\bfa_j})^{-1}[X_j]$ added to $\bfB$.
    \item For even successors $i=j+1=\delta+2n+2$,
    we pick by 
    book-keeping 
    an $f_j\in\Sym(\lambda)$ and an $A_j\in [\lambda]^\lambda$. Then
    we choose 
    $\bfa_{j+1}\ge_\AP \bfa_j$ 
    spoiling $(f_j,A_j)$, using Lemma~\ref{lem:spoilsport}.
\end{itemize}
Then $\btphi:=\bigcup_{i<\lambda} \btpi^{\bfa_i}$ is a nowhere trivial
automorphism according to Fact~\ref{fact:central}.\end{proof}

\section{Forcing a nowhere trivial automorphism with \texorpdfstring{$2^\lambda>\lambda^+$}{2 to the lambda large}, \texorpdfstring{$\lambda$}{lambda}~inaccessible}

\begin{theorem}\label{thm:b}
    Assume $\lambda$ is inaccessible, $2^\lambda=\lambda^+$ and $\mu>\lambda^+$ 
    is regular.
    Then there is a cofinality preserving (${<}\lambda$-closed
    and $\lambda^+$-cc) poset which forces: $2^\lambda=\mu$, and there is a nowhere trivial
    automorphism of $\mathcal P(\lambda)/[\lambda]^{<\lambda}$.
\end{theorem}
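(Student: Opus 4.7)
My plan is to mimic the construction of Theorem~\ref{thm:a} but replace the length-$\lambda^+$ recursion by a length-$\mu$ forcing iteration. I would set up a ${<}\lambda$-support iteration $\bbP=\langle \bbP_\alpha,\bbQ_\alpha:\alpha<\mu\rangle$ whose generic produces a $\le_\AP$-increasing chain $(\bfa_\alpha)_{\alpha<\mu}$ in $\APp$. The natural choice for the iterand at stage $\alpha$ is, in $\bfV^{\bbP_\alpha}$, the poset of conditions in $\AP_{<\lambda}$ that are $\le_\AP$-above the union of the previously constructed chain, ordered by $\ge_\AP$. A bookkeeping function of length $\mu$ prescribes, at each stage, one of three tasks for $\bfa_\alpha$: add a given $\bbP$-name for some $X\subseteq\lambda$ to $\bfB$ together with its $\bfpi^{-1}$-preimage (to make $\bftB$ exhaust $P(\lambda)/[\lambda]^{<\lambda}$ on both sides), or spoil a $\bbP$-name for a pair $(f,A)$ via Lemma~\ref{lem:spoilsport}, whose proof carries over verbatim to $\bfV^{\bbP_\alpha}$ since $\lambda$ remains inaccessible. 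Fact~\ref{fact:central} then converts the generic chain into a nowhere-trivial automorphism in the final extension.

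The ${<}\lambda$-closure of $\bbP$ is the easier half: at a stage of cofinality ${<}\lambda$, a short sequence of conditions has an $\le_\AP$-upper bound in $\AP_{<\lambda}$ by Lemma~\ref{lem:shortlimit}, and this argument relativises through the iteration. Once the chain condition is established, $2^\lambda=\mu$ in the extension follows from a standard nice-names count, and one arranges the bookkeeping so that the $\mu$ many names for subsets of $\lambda$ and for pairs $(f,A)$ are all visited.

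The main obstacle is proving the $\lambda^+$-cc of $\bbP$, and my plan here is a $\Delta$-system argument. Inaccessibility of $\lambda$ gives $2^{<\lambda}=\lambda$, so given $\lambda^+$ conditions we may thin to a $\Delta$-system of ${<}\lambda$-sized supports. At each coordinate the essential ``finite information'' carried by a condition is the data fixed in Definition~\ref{def:good}: a good ordinal $\zeta$, the initial segments $\bfpi\restriction\zeta$ and $\bfC\cap\zeta$, and the trace of $\bfB$ on $\zeta$. There are only $\lambda$-many such types, so after further thinning we may assume all conditions share a common coordinatewise type. Two such conditions should then be amalgamated in the spirit of Fact~\ref{fact24}: on each coordinate outside the common $\Delta$-root one thins $\bfC$ to the intersection and unions the $\bfB$'s, and the good-$\zeta$ framework ensures the result still lies in $\AP_{<\lambda}$ above both parents.

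The delicate subpoint will be lifting this single-coordinate amalgamation coherently through the iteration, since the condition at stage $\alpha$ is a $\bbP_\alpha$-name depending on the prior generic. I expect to handle this by induction on $\alpha$: use ${<}\lambda$-closure of the tail iteration to decide the relevant finite data below a common master condition, then apply the single-coordinate amalgamation above it, assembling the results into a lower bound of the two $\bbP$-conditions. Once $\lambda^+$-cc is in hand, cofinality preservation is automatic, and the bookkeeping together with Fact~\ref{fact:central} delivers the nowhere-trivial automorphism required by Theorem~\ref{thm:b}.
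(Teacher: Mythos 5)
Your high-level plan (iterate for $\mu$ steps, bookkeep subsets to exhaust $P(\lambda)/[\lambda]^{<\lambda}$ and pairs $(f,A)$ to spoil, then invoke Fact~\ref{fact:central} and close with a $\Delta$-system argument for $\lambda^+$-cc) matches the paper. But the concrete choice of iterand is where the proposal breaks, and this is precisely the place where the paper does something different and essential.

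You define $Q_\alpha$ as the set of $\bfa\in\AP_{<\lambda}$ that lie $\le_\AP$-above the entire previously constructed chain. This is problematic in two ways. First, it is generally ill-defined: $\bfb\ge_\AP\bfa$ requires $\bfB^\bfb\supseteq\bfB^\bfa$, so once the prior chain's $\bfB$'s have accumulated $\ge\lambda$ many elements (which happens by stage $\lambda$ at the latest), no $\bfa\in\AP_{<\lambda}$ can dominate it; and when $\cf(\alpha)>\lambda$, the prior sequence of $\bfpi$'s need not stabilize, so there may be \emph{no} upper bound in $\AP$ at all. The paper explicitly flags this (``we do not claim that this sequence has an upper bound in $V_\alpha$''). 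Second, the compatibility argument does not go through: the ``type'' you fix (a good $\zeta$, plus $\bfpi\restriction\zeta$, $\bfC\cap\zeta$, and the trace of $\bfB$ on $\zeta$) does not control what the full permutations $\bfpi^\bfa$ and $\bfpi^{\bfa'}$ do \emph{above} $\zeta$. Two conditions with identical type can disagree on $\bfpi$ on a tail of intervals, and then there is no common $\le_\AP$-extension, since such an extension would have to agree with each of them on a tail of $\bfC^\bfc$-indexed intervals (which is incompatible if they disagree on those intervals). Fact~\ref{fact24} keeps $\bfpi$ fixed and only shrinks $\bfC$ and enlarges $\bfB$; it does not let you merge two different $\bfpi$'s. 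The paper avoids both problems by making a condition $q\in Q_\alpha$ carry only a \emph{bounded} trunk $b^q\in\IS$ plus a short reference set $X^q\in[\alpha]^{<\lambda}$ and bounded traces $\bfB^q_\beta$, requiring only that $\delta^{b^q}$ be good for the short sequence $\bar\bfa^q$. This makes $Q_\alpha$ $\lambda$-centered via $q\mapsto b^q$ (Lemma~\ref{lem:dense}(\ref{item:Qventer})), and then $\lambda^+$-cc follows from a $\Delta$-system argument on the trunk sequences after first passing to a dense set on which the trunk sequence is decided. The generic $\bfa^*_\alpha$ is assembled from the union of the trunks (with $\bfB^{\bfa^*_\alpha}:=P(\lambda)$); it dominates all earlier $\bfa^*_\beta$ by density rather than by a pointwise requirement on conditions.
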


For the rest of this section we fix a $\mu$ as in the lemma.

We will construct a ${<}\lambda$-support iteration 
$(P_\alpha,Q_\alpha)_{\alpha< \mu}$.
We call the final limit $P$. We 
denote the $P_\alpha$-extension $V[G_\alpha]$ by $V_\alpha$.

Each $Q_\alpha$ and therefore also
each $P_\alpha$ will be ${<}\lambda$-closed.

So $x\in \AP$, $x<_\AP y$, as well as $\IS$ (as set) are absolute between $P_\alpha$-extensions (and $|\IS|=\lambda$).

Each $Q_\alpha$ will add a $\bfa^*_\alpha\in \AP$,
such that the $\bfa^*_\alpha$ are $<_\AP$-increasing 
in $\alpha$.

\bigskip

By induction we assume we live in the $P_{\alpha}$-extension $V_\alpha$
where we already have the increasing sequence 
    $(\bfa^*_i)_{i<\alpha}$.
    (We do not claim that this sequence has an upper bound in $V_\alpha$.)

We 
now define $Q_\alpha$, which we will just call $Q$ to improve readability. 
    
\begin{definition}
 $q\in Q$ consists of:
        \begin{enumerate}
        \item 
            A $b^q\in \IS$, also called ``trunk of $q$''.
            
            We also write $\delta^q$, $\pi^q$
            $C^q$ and $I^q_\beta$  instead of
            $\delta^{b^q}$ etc. 
            \item A set $X^q\in[\alpha]^{<\lambda}$, and 
            for $\beta\in X^q$, a set 
            $\bfB^q_\beta\in [\bfB^{\bfa^*_\beta}]^{<\lambda}$,
             such that the $\bfB^q_\beta$ are increasing in
             $\beta$.
             \item For $\beta\in X^q$ 
             set $\bfa^q_\beta$ to be 
             $\bfa^*_\beta$ with $\bfB$ replaced by
             $\bfB^q_\beta$.
             Set $\bar \bfa^q:=(\bfa^q_\beta)_{\beta\in X^q}$
             (which is short).
             \item We require $\delta^{b^q}$ to be good for 
             $\bar \bfa^q$.
         \end{enumerate}
\end{definition}
(``Short'' and ``good'' are defined in Definition~\ref{def:good}.) 
As we use $Q$ as forcing poset, we follow the 
notation that $r\le_Q q$ means that $r$ is stronger than $q$ (whereas in $<_\AP$ and $<_\IS$
the stronger object is the larger one).
\begin{definition}\label{def:uwehtwe}
     $r\le_Q q$ if:
        \begin{enumerate}
    \item $b^r\ge_{\bar\bfa^q} b^q$ (see Definition~\ref{def:strongext}).
            \item $X^r\supseteq X^q$, and
            $\bfB^r_\beta \supseteq \bfB^q_\beta$
            for $\beta\in X^q$.
            

    \end{enumerate}
\end{definition}

The following follows immediately from the definitions:
\begin{fact}
    Assume that $r\le_Q q$, $b\in \IS$  and that $\delta^b$ is good for $\bar\bfa^r$. Then 
    $c\ge_{\bar \bfa^r} b$ implies
    $c\ge_{\bar \bfa^q} b$.
\end{fact}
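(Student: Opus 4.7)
The claim is a straightforward unpacking of definitions, and I expect no genuine obstacle; the fact is recorded precisely so that monotonicity of the extension relation in the $\bfB$-components can be invoked later without fuss. My plan is to fix an arbitrary $\beta\in X^q$ and show $c\ge_{\bfa^q_\beta}b$ by checking each clause of Definition~\ref{def:strongext}; this suffices, since $c\ge_{\bar\bfa^q}b$ is defined as the conjunction of these over $\beta\in X^q$, and the hypothesis $r\le_Q q$ together with clause~(2) of Definition~\ref{def:uwehtwe} supplies $X^q\subseteq X^r$, so that $c\ge_{\bfa^r_\beta}b$ is available for each such $\beta$.

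The key observation is that $\bfa^q_\beta$ and $\bfa^r_\beta$ arise from the common globally defined $\bfa^*_\beta$ by the same recipe: keep $\bfpi^{\bfa^*_\beta}$ and $\bfC^{\bfa^*_\beta}$, and replace $\bfB$ by $\bfB^q_\beta$, respectively $\bfB^r_\beta$. Definition~\ref{def:uwehtwe}(2) gives $\bfB^q_\beta\subseteq\bfB^r_\beta$, so the two approximations differ only by a shrinking of the $\bfB$-component. Consequently the first three clauses of Definition~\ref{def:strongext} — namely $c>_\IS b$, the inclusion $(C^c\cup\{\delta^c\})\setminus\delta^b\subseteq\bfC^{\bfa^*_\beta}$, and the agreement $\pi^c\restriction I^{\bfa^*_\beta}_\zeta=\bfpi^{\bfa^*_\beta}\restriction I^{\bfa^*_\beta}_\zeta$ on the relevant intervals — will transfer verbatim from the $\bfa^r_\beta$-version to the $\bfa^q_\beta$-version, since they reference only data that coincide.

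For the fourth clause I would observe that the requirement $\pi^c[A']=\bfpi^\bfa[A']$ (with $A':=A\cap\delta^c\setminus\delta^b$) is quantified over $A\in\bfB^\bfa$ and is therefore monotone-decreasing in $\bfB^\bfa$: specializing the $\bfB^r_\beta$-version to elements of $\bfB^q_\beta\subseteq\bfB^r_\beta$ gives exactly the $\bfB^q_\beta$-version. Finally, for the relation $c>_{\bfa^q_\beta}b$ to be well-posed one needs $\delta^b\in\bfC^{\bfa^q_\beta}$, but $\bfC^{\bfa^q_\beta}=\bfC^{\bfa^r_\beta}$ and $\delta^b$ belongs to the latter by the goodness hypothesis on $\bar\bfa^r$. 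All four clauses are thereby verified, $c\ge_{\bfa^q_\beta}b$ for each $\beta\in X^q$, and the desired conclusion $c\ge_{\bar\bfa^q}b$ follows.
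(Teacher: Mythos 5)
The proposal is correct and is precisely the natural unpacking of definitions that the paper leaves implicit (the paper asserts the fact "follows immediately from the definitions" without supplying a proof). You correctly identify the two points that do the work — $X^q\subseteq X^r$ and $\bfB^q_\beta\subseteq\bfB^r_\beta$ from Definition~\ref{def:uwehtwe}(2), together with the observation that $\bfa^q_\beta$ and $\bfa^r_\beta$ share the same $\bfC$- and $\bfpi$-data so that only the $\bfB$-quantified clause of Definition~\ref{def:strongext} is affected, and that clause is antitone in $\bfB$.
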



This implies that $\le_Q$  is transitive. (It
even is a  partial order.)

\begin{lemma}\label{lem:dense} For $q\in Q$, the following holds (in $V_\alpha$):
Let $E\subseteq \lambda$ be club.
    \begin{enumerate}
        \item\label{item:bla3} For $\beta<\alpha$ 
        and $A\in \bfB^{\bfa^*_\beta}$ there is an $r<_Q q$ with $\delta^r\in E$, $\beta\in X^r$ and  $A\in\bfB^r_\beta$.
        \item\label{item:blax} For any $A\in [\lambda]^\lambda$ and $f\in\Sym(\lambda)$
        (both in $V_\alpha$) there is an 
        $r\le_Q q$
        with $b^r<^{f,A} b^q$. 
            
            \item\label{item:Qventer} $Q$ is $\lambda$-centered, witnessed by the function that maps $q$ to its trunk, $b^q$.
            
            (Actually, even ${<}\lambda$ many conditions with the
            same trunk have lower bound.)
            \item\label{item:bla1} $Q_\alpha$ is ${<}\lambda$-closed.
            
            Moreover, a sequence $(q_i)_{i\in\xi}$ ($\xi<\lambda$) has a canonical limit $r$, and the
            trunk of $r$ is the union of the trunks of the $q_i$.
            \end{enumerate}
\end{lemma}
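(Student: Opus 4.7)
For each of the four items the pattern will be the same: construct a candidate $r$ (its trunk $b^r\in\IS$, its domain $X^r\subseteq\alpha$, and the sets $\bfB^r_\beta$), verify $r\in Q$ (the one nontrivial clause being that $\delta^{b^r}$ is good for the short sequence $\bar\bfa^r$), and then check the two clauses of Definition~\ref{def:uwehtwe}. All trunk manipulations will be funneled through Lemmas~\ref{lem:shortlimit}, \ref{lemma:tr335}(\ref{item:ISextex}) and \ref{lem:purespoil}, applied to the short sequence at hand.

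For (\ref{item:bla3}), I would set $X^r:=X^q\cup\{\beta\}$ and enlarge $\bfB^q_\gamma$ by $A$ for every $\gamma\in X^r$ with $\gamma\ge\beta$ (legal since $A\in\bfB^{\bfa^*_\beta}\subseteq\bfB^{\bfa^*_\gamma}$), leaving $\bfB^q_\gamma$ untouched below $\beta$ and, if $\beta\notin X^q$, defining $\bfB^r_\beta$ so as to preserve monotonicity. Let $E'$ be the club of $\bar\bfa^r$-good ordinals furnished by Fact~\ref{fact:goodisclub}(\ref{item:ouhjqw}); Lemma~\ref{lemma:tr335}(\ref{item:ISextex}) applied to the still-short $\bar\bfa^q$ and $b^q$ then yields $c>_{\bar\bfa^q}b^q$ with $\delta^c\in E\cap E'$, and $b^r:=c$ closes the case. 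For (\ref{item:blax}) I would keep $X^r:=X^q$ and $\bfB^r:=\bfB^q$ and simply invoke Lemma~\ref{lem:purespoil} with $\bar\bfa^q,b^q,f,A$ to obtain $d>^{f,A}_{\bar\bfa^q}b^q$; taking $b^r:=d$ works because the eventual goodness clauses (b)--(d) at $\delta^{b^q}$, together with the inclusion of the new club-points of $d$ in each $\bfC^{\bfa^*_\gamma}$ guaranteed by $d>_{\bar\bfa^q}b^q$, transfer goodness up to $\delta^d$.

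For (\ref{item:Qventer}), given conditions $q_i$ ($i<\xi<\lambda$) all with trunk $b$, the natural candidate is $X^r:=\bigcup_i X^{q_i}$ and $\bfB^r_\gamma:=\bigcup\{\bfB^{q_i}_\rho:\rho\in X^{q_i},\,\rho\le\gamma\}$, both of cardinality ${<}\lambda$ by the inaccessibility of $\lambda$. The short sequence $\bar\bfa^r$ has an $\AP_{<\lambda}$-upper bound $\bfb$ by Lemma~\ref{lem:shortlimit}; I would pick $\delta^{b^r}\in\bfC^\bfb$ past $\delta^b$ and past all the ${<}\lambda$ exceptional bounds coming from $\bfb\ge_\AP\bfa^r_\gamma$ ($\gamma\in X^r$), and let $b^r$ extend $b$ by $C^{b^r}:=C^b\cup\{\delta^b\}$, with $\pi^{b^r}$ equal to $\pi^b$ below $\delta^b$ and to $\bfpi^\bfb$ on $[\delta^b,\delta^{b^r})$. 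The main obstacle will be verifying $b^r>_{\bar\bfa^{q_i}}b$ for every $i$: membership of the new club-points $\delta^b,\delta^{b^r}$ in each $\bfC^{\bfa^*_\gamma}$ (for $\gamma\in X^{q_i}$) uses the goodness of $\delta^b$ for $\bar\bfa^{q_i}$ plus the large-enough choice of $\delta^{b^r}$, and the $\pi$-agreement $\bfpi^\bfb\restriction I^{\bfa^*_\gamma}_{\delta^b}=\bfpi^{\bfa^*_\gamma}\restriction I^{\bfa^*_\gamma}_{\delta^b}$ is an instance of Definition~\ref{def:lenew}(2) for the pair $(\bfa^r_\gamma,\bfb)$, once $\delta^{b^r}$ lies past the corresponding exceptional bound.

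Finally, for (\ref{item:bla1}) with a limit $\xi<\lambda$ and $q_{j}\le_Q q_i$ for $i<j<\xi$, the trunks are $<_\IS$-increasing, so their limit $b^r$ is produced by Fact~\ref{fact:istriv1}(\ref{item:ounewgew}); $X^r$ and $\bfB^r$ are defined as in (\ref{item:Qventer}). Here goodness of $\delta^{b^r}=\sup_i\delta^{b^{q_i}}$ for $\bar\bfa^r$ is cheap: for any pair $i\le k$ in $X^r$, the monotonicity of the $X^{q_j}$'s yields some $j$ with $i,k\in X^{q_j}$, and goodness of $\delta^{b^{q_j}}$ for $\bar\bfa^{q_j}$ then gives clauses (b)--(d) above $\delta^{b^{q_j}}$ and hence above $\delta^{b^r}$; clause (a) follows from $\delta^{b^r}$ being a limit of points of the club $\bfC^{\bfa^*_i}$. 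The verifications $r\le_Q q_i$ then telescope through the individual $q_{i+1}\le_Q q_i$ relations, and the description of the trunk as the union of the trunks is built into the construction.
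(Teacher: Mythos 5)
The overall plan (reduce everything to the trunk lemmas~\ref{lem:shortlimit}, \ref{lemma:tr335}(\ref{item:ISextex}) and \ref{lem:purespoil}) is sound, and items (\ref{item:bla3}), (\ref{item:blax}) and (\ref{item:bla1}) coincide in substance with the paper's argument: for (\ref{item:bla3}) enlarge $\bar\bfa^q$ to $\bar\bfa^r$ and use Lemma~\ref{lemma:tr335}(\ref{item:ISextex}) with $E$ intersected with the club of $\bar\bfa^r$-good points; (\ref{item:blax}) is just Lemma~\ref{lem:purespoil}; and for (\ref{item:bla1}) take the pointwise union and check goodness of the limit height via the decreasing chain (where monotonicity of the $X^{q_i}$ makes the pair-goodness argument go through, as you correctly observe).

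The genuine divergence is in (\ref{item:Qventer}). The paper takes $b^r:=b$, i.e. the lower bound keeps the \emph{same} trunk, with $X^r$ and the $\bfB^r_\gamma$ defined as unions; you instead build a \emph{strictly taller} trunk, by first forming an $\le_\AP$-upper bound $\bfb$ of the merged short sequence via Lemma~\ref{lem:shortlimit} and then pushing $\delta^{b^r}$ past various bounds. This route is more machinery than the paper uses, and its key verification step does not close. You need $\pi^{b^r}\restriction I^{\bfa^*_\gamma}_{\delta^b}=\bfpi^{\bfa^*_\gamma}\restriction I^{\bfa^*_\gamma}_{\delta^b}$ for every $\gamma\in\bigcup_\ell X^{q_\ell}$, and you propose to get it from Definition~\ref{def:lenew}(2) for the pair $(\bfa^r_\gamma,\bfb)$ ``once $\delta^{b^r}$ lies past the corresponding exceptional bound.'' But the interval in question is indexed by the \emph{fixed} ordinal $\delta^b$, the common trunk height; Definition~\ref{def:lenew}(2) only gives agreement on $I^{\bfa^*_\gamma}_\veps$ for $\veps\in\bfC^\bfb$ \emph{above} the exceptional bound of that pair, and there is no reason $\delta^b$ lies above that bound. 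Pushing the \emph{top} $\delta^{b^r}$ higher does nothing to move $\delta^b$. (Separately, for your choice of $\pi^{b^r}$ to even be equal to $\bfpi^\bfb$ on $[\delta^b,\delta^{b^r})$ you would need $\delta^b\ge\zeta_0$, the least $\bar\bfa^r$-good ordinal in Lemma~\ref{lem:shortlimit}'s proof; but nothing guarantees $\delta^b$ is $\bar\bfa^r$-good when the $X^{q_\ell}$ are not pairwise comparable.) So this part of the proposal has a gap. The paper's own treatment of (\ref{item:Qventer}) avoids this entirely by not touching the trunk, relying only on the observation that $r$ with $b^r=b$, $X^r=\bigcup_\ell X^{q_\ell}$ and unioned $\bfB^r$ is itself in $Q$ and $\le_Q$-below each $q_\ell$, and you should compare your argument with that before going the route of a taller trunk.
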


\begin{proof}
    (\ref{item:bla3}): 
    Extend $\bar\bfa^q$ in the obvious
    way to $\bar\bfa^r$:
    Add $\beta$ to the index set, 
    set 
    $\bfB^r_\beta:=\{A\}\cup \bigcup_{\zeta\in X^q\cap (\beta+1)}\bfB^q_\zeta$,
    and add $A$ to all  $\bfB^q_\zeta$
    for $\zeta\in X^q\setminus \beta$.
    Let $E':=\{\zeta\in\lambda:\, \zeta\text{ good for }\bar\bfa^r\}$.
    Then $E'$ is club according to Fact~\ref{fact:goodisclub}, 
    so we can use
    Lemma~\ref{lemma:tr335}(\ref{item:ISextex})
    to find $b^r>_{\bar\bfa^q}b^q$ with $\delta^r\in E\cap E'$.
    
    (\ref{item:blax}) This is Lemma~\ref{lem:purespoil}.

    (\ref{item:Qventer}) Let $(q_i)_{i\in\mu}$, $\mu<\lambda$
    all have the same trunk $b$. Then the following $r$ is a 
    condition in $Q$:
    $b^r=b$, $X^r=\bigcup_{i<\mu}X^{q_i}$ and 
    $B^r_\zeta= \bigcup_{i<\mu\ \&\ \zeta\in X^{q_i}}B^{q_i}_\zeta$.

    (\ref{item:bla1}) Let $(q_i)_{i<\zeta}$ 
    with $\zeta<\lambda$ be $<_Q$-decreasing.
    Then the obvious union $r$ is an element of $Q$
    and stronger than each $q_i$: 
    
    $b^r$ is the union of the $b^{q_i}$, 
    as in Fact~\ref{fact:istriv1}, and 
    $X^r:=\bigcup_{i<\zeta} X^{q_i} $
    and $\bfB^r_\beta:=\bigcup_{i<\zeta, \beta\in X^ {q_i}} \bfB^{q_i}_\beta$ for each $\beta\in X^r$.

    Then $\delta^r$ is good for $\bfa^r_\beta$ 
    for $\beta\in X^r$:
    It is enough to show that $\delta^r$ is good for all
    $\bfa^{q_i}_\beta$ (for sufficiently large $i$). 
    Fix such an $i$. If $j>i$, then
    $\delta^{q_j}$ is good for 
    $\bar \bfa^{q_j}$ and therefore for
    $\bfa^{q_j}_\beta$ and therefore for 
    $\bfa^{q_i}_\beta$. So the limit $\delta^r$ is good as well.

    Similarly one can argue that $b^r>_{\bar \bfa^{q_i}} b^{q_i}$ for all $i<\zeta$.
   \end{proof}

\begin{definition}
    Let $G(\alpha)$ be $Q_\alpha$-generic.
    We define $\bfa^*_\alpha$ (in $V_{\alpha+1}$) as follows:
    
    $\bfC^{\bfa^*_\alpha}:=\bigcup_{q\in G(\alpha)} C^q$,
    $\bfpi^{\bfa^*_\alpha}:=\bigcup_{q\in G(\alpha)} \pi^q$,
    and $\bfB^{\bfa^*_\alpha}:=P(\lambda)$.
\end{definition}


\begin{lemma} $P_{\alpha+1}$ forces:
\begin{enumerate}
    \item\label{item:uqwhrq} $\bfa^*_\alpha>_\AP \bfa^*_\beta$ for all $\beta<\alpha$.
    \item \label{item:uqwhrq2} $\bfa^*_\alpha$ spoils $(f,A)$ for all
    $(f,A)\in V_\alpha$.
\end{enumerate}
\end{lemma}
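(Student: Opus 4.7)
For part~(\ref{item:uqwhrq}), I fix $\beta<\alpha$ and work in $V_{\alpha+1}$; the plan is to verify the three clauses of $\ge_\AP$ from Definition~\ref{def:lenew}. The key input is Lemma~\ref{lem:dense}(\ref{item:bla3}): for every $A\in\bfB^{\bfa^*_\beta}$, the set of $q$ with $\beta\in X^q$ and $A\in\bfB^q_\beta$ is dense, so by genericity each such $A$ has a witness $q_A\in G(\alpha)$; in particular, I fix some $q_0\in G(\alpha)$ with $\beta\in X^{q_0}$. Every $r\le_Q q_0$ in $G(\alpha)$ satisfies $b^r\ge_{\bar\bfa^{q_0}} b^{q_0}$ and hence $b^r\ge_{\bfa^{q_0}_\beta} b^{q_0}$, so unpacking Definition~\ref{def:strongext} gives $(C^r\cup\{\delta^r\})\setminus\delta^{q_0}\subseteq\bfC^{\bfa^*_\beta}$ and $\pi^r\restriction I^{\bfa^*_\beta}_\veps=\bfpi^{\bfa^*_\beta}\restriction I^{\bfa^*_\beta}_\veps$ for every $\veps\in C^r\setminus\delta^{q_0}$. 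Taking the union over $r$ in $G(\alpha)$ yields clauses~(1) and~(2) of $\ge_\AP$. For clause~(3), given any $A\in\bfB^{\bfa^*_\beta}$, the last bullet of Definition~\ref{def:strongext} applied via the witness $q_A$ gives $\bfpi^{\bfa^*_\alpha}[A\setminus\delta^{q_A}]=\bfpi^{\bfa^*_\beta}[A\setminus\delta^{q_A}]$, whence $\bfpi^{\bfa^*_\alpha}[A]=^*\bfpi^{\bfa^*_\beta}[A]$.

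For part~(\ref{item:uqwhrq2}), I fix $(f,A)\in V_\alpha$ and plan to feed a suitable chain of trunks to Lemma~\ref{lem:dideldum}. By Lemma~\ref{lem:dense}(\ref{item:blax}), the set $\{r\le_Q q:b^r>^{f,A}b^q\}$ is dense below each $q\in Q_\alpha$; since strengthening a trunk preserves $>^{f,A}$, this combines with dense sets of large trunk height to produce, recursively in $V_{\alpha+1}$, a $\le_Q$-decreasing sequence $(q_i)_{i<\lambda}$ in $G(\alpha)$ with $\delta^{q_i}$ cofinal in $\lambda$ and $b^{q_{i+1}}>^{f,A}b^{q_i}$ at every successor stage. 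At limits $\gamma<\lambda$, I use the canonical limit from Lemma~\ref{lem:dense}(\ref{item:bla1}): since ${<}\lambda$-closed forcing adds no ${<}\lambda$-sequences of ground-model elements, each initial segment $(q_i)_{i<\gamma}$ already lies in $V_\alpha$, so a common lower bound exists in $Q_\alpha$ and, by genericity (directedness of $G(\alpha)$), can be chosen in $G(\alpha)$. A direct check shows that $\bfa^*_\alpha$ coincides with ``a limit'' of $(b^{q_i})_{i<\lambda}$ in the sense of Fact~\ref{fact:istriv1}(\ref{item:ounewgewb}) with $B=P(\lambda)$, so Lemma~\ref{lem:dideldum} supplies $A'\in[A]^\lambda$ for which $\bfa^*_\alpha$ spoils $(f,A)$, using $A'\in\bfB^{\bfa^*_\alpha}=P(\lambda)$.

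The main technical subtlety will be the limit stage of the recursion in part~(\ref{item:uqwhrq2}): I need the common lower bound to lie in the generic filter, not merely in the poset. This is resolved by the standard preservation theorem for ${<}\lambda$-closed forcing, which guarantees that each short initial segment of the recursively defined sequence is already in $V_\alpha$, so the ${<}\lambda$-directedness of $G(\alpha)$ applies. A minor routine check is that $\bfa^*_\alpha\in\AP$, i.e., $\bfpi^{\bfa^*_\alpha}\in\Sym(\lambda)$ and $\bfC^{\bfa^*_\alpha}$ is a club of $\lambda$; this follows from the pairwise compatibility of conditions in $G(\alpha)$ and the density of height-extending conditions.
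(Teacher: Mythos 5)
Your proof takes essentially the same route as the paper's, and part (1) is correct and matches the paper's argument. For part (2), the paper is slightly leaner: it observes directly that the chain of trunks from $G(\alpha)$ (which has heights cofinal in $\lambda$) already satisfies, by a one-step density/genericity argument for Lemma~\ref{lem:dense}(\ref{item:blax}), the hypothesis of Lemma~\ref{lem:dideldum} — namely, unboundedly often one finds $q, r(q) \in G(\alpha)$ with $b^{r(q)} >^{f,A} b^q$ — so there is no need to build a separate decreasing sequence of conditions and worry about ${<}\lambda$-directedness at limits. Your construction is correct, just more elaborate than necessary.

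One small slip worth flagging: the claim that ``strengthening a trunk preserves $>^{f,A}$'' is not literally true. If $b^r >^{f,A} b^q$ and $b^s \ge_\IS b^r$, then $b^s >_\IS b^q$ and the witnessing $\xi^*$ still works, but the clause $f\restriction\delta^s\in\Sym(\delta^s)$ can fail. This is easily repaired — either first extend the trunk to a large height in the club $\{\zeta: f\restriction\zeta\in\Sym(\zeta)\}$ and only then invoke Lemma~\ref{lem:dense}(\ref{item:blax}), or simply settle for $>^{f,A}$ holding unboundedly often rather than at every successor, which is all Lemma~\ref{lem:dideldum} needs — but as written the intermediate step is stated incorrectly.
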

The proof consists of straightforward density 
arguments:
\begin{proof}
    For~(\ref{item:uqwhrq}) we 
    know that by
    there is some $q\in G(\alpha)$ with
    $\beta\in X^q$. This implies that
    $\bfC^{\bfa^*_\alpha}\subseteq \bfC^{\bfa^*_\beta}$ 
    above $\delta^q$ and that
    $\bfpi^{\bfa^*_\alpha}\restriction  I^{\bfa^*_\beta}_\zeta=
    \bfpi^{\bfa^*_\beta}\restriction  I^{\bfa^*_\beta}_\zeta$
    for all $\zeta\in \bfC^{\bfa^*_\beta}\setminus \delta^q$.
    We can also assume that a given $A\in\bfB^{\bfa^*_\beta}$
    is in $\bfB^q_\beta$, which implies that 
    $\bfpi^{\bfa^*_\alpha}[A]=\bfpi^{\bfa^*_\beta}[A]$
    above $\delta^q$.

    For~(\ref{item:uqwhrq2}) and $(f,A)\in V_\alpha$ we know by
    Lemma~\ref{lem:dense}(\ref{item:blax})
    that for $q\in G(\alpha)$ of unbounded  heights 
    there are 
    $r(q)$ in $G(\alpha)$ such that
    $b^{r(q)} >^{f,A} b^q$.
    I.e, in $V_{\alpha+1}$, $\bfa_\alpha^*$ is a limit 
    of an $<_\IS$-increasing sequence as in Lemma~\ref{lem:dideldum}, 
    therefore $\bfa^*_\alpha$ spoils $(f,A)$
    (as $A'$ certainly is in $\bfB^{\bfa^*_\alpha}=P(\lambda)$).
\end{proof}    

So $P$ adds a sequence $(\bfa^*_\alpha)_{\alpha<\mu}$
that we can use in Fact~\ref{fact:central} to get 
a nowhere trivial automorphism. 
We will now show that $P$ is $\lambda^+$-cc, which finishes
the proof of Theorem~\ref{thm:b}.

\begin{lemma}
    Set $t(p):=(b^{p(\alpha)})_{\alpha\in\dom(p)}$ (i.e., the sequence
    of trunks). Then the following set $D$ is dense: $p$ 
    in $D$ if there is an $x\in V$ such that the empty
    condition forces $t(p)=x$.
\end{lemma}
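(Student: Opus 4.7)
Given $p\in P$, the plan is to build $p'\leq p$ such that for each $\alpha\in\dom(p')$, the $P_\alpha$-name $p'(\alpha)$ is constructed to have trunk equal to a specific $b_\alpha\in\IS$ in every $P_\alpha$-extension; then $x:=(b_\alpha)_{\alpha\in\dom(p')}\in V$ witnesses $p'\in D$, since $\IS$ is an absolute ground-model set.

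Enumerate $\dom(p)=\{\alpha_i:i<\sigma\}$ in increasing order ($\sigma<\lambda$) and construct $p'\restriction\alpha_i$ recursively, maintaining the invariant that $p'\restriction\alpha_i\leq p\restriction\alpha_i$ and that, for each $j<i$, the empty condition of $P_{\alpha_j}$ already forces $b^{p'(\alpha_j)}=\check b_j$.  At the successor step, use density of deciding (permissible since $|\IS|=\lambda$ and $\IS\subseteq V$) to pick $r\leq p'\restriction\alpha_i$ in $P_{\alpha_i}$ with $r\Vdash b^{p(\alpha_i)}=\check b_i$ for some $b_i\in\IS$, and define the $P_{\alpha_i}$-name
\[
\dot q_i:=\begin{cases} p(\alpha_i), & \text{if } r\in G_{\alpha_i},\\ \text{(trivial }Q_{\alpha_i}\text{-condition with trunk } b_i\text{)}, & \text{otherwise,}\end{cases}
\]
so $b^{\dot q_i}=b_i$ in every $P_{\alpha_i}$-extension.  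Set $p'(\alpha_i):=\dot q_i$ and take $p'\restriction(\alpha_i+1)$ to be $r$ extended by $\dot q_i$ at coordinate $\alpha_i$; the extension below $r$ makes $\dot q_i=p(\alpha_i)$, so $p'\restriction(\alpha_i+1)\leq p\restriction(\alpha_i+1)$.  At limits $\gamma<\sigma$, take canonical limits via the $<\lambda$-closedness of $P$ (coordinatewise from Lemma~\ref{lem:dense}(\ref{item:bla1})).

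The main obstacle is preservation of the invariant when replacing $p'\restriction\alpha_i$ by $r$: the components $r(\alpha_j)$ for $j<i$ may properly strengthen $p'(\alpha_j)$, so their trunks need no longer be decided by the empty condition.  To handle this, when picking $r$ I arrange simultaneously that $r$ decides $b^{r(\alpha_j)}$ for every $j<i$ — achievable by a further $<\lambda$-closed refinement inside $P_{\alpha_i}$, since these are $<\lambda$ many names — and then reapply the same stabilization trick at each $\alpha_j$, redefining the earlier $p'(\alpha_j)$'s.  The new decided trunks $\leq_\IS$-extend the previous $b_j$'s but the ``decided by empty'' property is restored at every coordinate $\leq\alpha_i$.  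Carrying this through all $\sigma$ stages (with limits handled as above), we obtain $p'\leq p$ whose entire trunk-sequence is forced by the empty condition to equal a ground-model object, so $p'\in D$.
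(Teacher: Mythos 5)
Your proposal is in the same spirit as the paper's proof (decide a trunk, then ``stabilize'' it with a case-split name so the empty condition decides it, iterate and take $<\lambda$-closed limits), but the paper organizes this as an induction on the iteration length $\alpha$: the successor case of that induction does the case-split stabilization at exactly one new coordinate, and the inductive density of $D$ in $P_{\alpha_{j+1}}$ cleanly packages the re-stabilization of lower coordinates when one strengthens to $r\in D_{j+1}$. You instead attempt a direct fusion over $\dom(p)$ and must face the re-stabilization of many coordinates simultaneously.

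The step where you identify the obstacle is right, but the fix you sketch has a real gap. After re-deciding $b^{r(\alpha_j)}$ for all $j<i$ and redefining $p'(\alpha_j)$ by the case-split ``$r(\alpha_j)$ if $r\restriction\alpha_j\in G$, else trivial with trunk $b_j'$'', you need the resulting $p''$ to satisfy $p''\leq r\leq p$. This holds only because one can prove, by induction along coordinates, that $p''\restriction\gamma\Vdash r\restriction\gamma\in G$; without that coherence claim, in the ``else'' branch the trivial condition is not $\leq p(\alpha_j)$ and $p''\leq p$ fails. You assert that ``the `decided by empty' property is restored'' without supplying this argument, and it is not obvious --- especially since your phrasing is ambiguous between conditioning on $r\in G_{\alpha_i}$ (which would \emph{not} work for $j<i$) versus $r\restriction\alpha_j\in G_{\alpha_j}$ (which does, after the induction). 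A secondary loose end: you fix the enumeration $\{\alpha_i:i<\sigma\}$ as $\dom(p)$ at the outset, but each $r$ can enlarge the support below $\alpha_i$, and these new coordinates also need deciding and stabilizing; the accumulation stays below $\lambda$ by regularity, but your bookkeeping does not accommodate it. The paper's induction on $\alpha$ avoids both issues, which is why its proof is substantially shorter.
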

    
\begin{proof}
    
    We claim that the lemma holds for $P_\alpha$,
    by induction on $P_\alpha$. Successors and 
    limits of cofinality ${\ge}\lambda$ are clear.
    
    Let $\alpha$ be a limit with cofinality $\kappa< \lambda$,
    and $(\alpha_i)_{i\in\kappa}$ cofinal in $\alpha$,
    $\alpha_0=0$. Set $D_j:=D\cap P_{\alpha_j}$
    (by induction dense in $P_{\alpha_j}$).
    We construct by induction on $j\in\kappa$
    a decreasing sequence $p_j\in P_\alpha$ such that
    $p_0=p$ and
    $p_j\restriction \alpha_j\in D$:
    
    Successors:
    Given $p_j$, we find $r\le p_j\restriction \alpha_{j+1}$
    in $D_{j+1}$ and set $p_{j+1}:=r\wedge p_j$
    (which is the same as $r\wedge p$).

    Limits:
    Given $(p_i)_{i<\xi}$ with $\xi\le\kappa$, let
    $p_\xi$ be the pointwise canonical limit. 
    Note that we can calculate (in $V$) each $p_{\xi}(\beta)$
    from the sequence $(p_i(\beta))_{i<\xi}$
    (it is just the union).
\end{proof}

\begin{lemma}(Assuming $2^\lambda=\lambda^+$ in the ground model.)
    $P$ is $\lambda^+$-cc.
\end{lemma}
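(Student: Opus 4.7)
The plan is to use a standard $\Delta$-system argument, together with the $\lambda$-centeredness of each $Q_\alpha$ (Lemma~\ref{lem:dense}(\ref{item:Qventer})) and the preceding density result for $D$. Given any family $\{p_\xi:\xi<\lambda^+\}$ of conditions in $P$, I would first extend each $p_\xi$ into $D$, so that the trunk sequence $t(p_\xi)=(b^{p_\xi(\alpha)})_{\alpha\in\dom(p_\xi)}$ is forced by $1_P$ to equal some ground-model object $x_\xi$.

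Next, apply the $\Delta$-system lemma to the supports $\{\dom(p_\xi):\xi<\lambda^+\}$, which are subsets of $\mu$ of cardinality $<\lambda$. Since $\lambda$ is inaccessible (so $\lambda^{<\lambda}=\lambda$), one can thin to a subfamily of size $\lambda^+$ whose supports form a $\Delta$-system with root $R\in[\mu]^{<\lambda}$. Then I would pigeonhole the restrictions $x_\xi\restriction R$: each such restriction is a function $R\to\IS$, and because every $b\in\IS$ is coded by a triple $(\delta^b,C^b,\pi^b)$ with $\delta^b<\lambda$, inaccessibility of $\lambda$ gives $|\IS|\le\lambda$. Hence there are at most $\lambda^{<\lambda}=\lambda<\lambda^+$ possible restrictions, and I may refine once more to a subfamily on which all $x_\xi\restriction R$ coincide with a common sequence $\bar b\in V$.

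Finally, any two conditions $p,p'$ in this last subfamily are compatible. I would define $r$ on $\dom(p)\cup\dom(p')\in[\mu]^{<\lambda}$ by setting $r(\alpha):=p(\alpha)$ on $\dom(p)\setminus R$, $r(\alpha):=p'(\alpha)$ on $\dom(p')\setminus R$, and, for $\alpha\in R$, letting $r\restriction\alpha$ force $r(\alpha)$ to be the canonical common extension of $p(\alpha)$ and $p'(\alpha)$ produced by Lemma~\ref{lem:dense}(\ref{item:Qventer}) from the shared trunk $\bar b(\alpha)$. I do not expect a serious obstacle: this is a routine $\Delta$-system argument. The only mildly delicate points are the bound $|\IS|\le\lambda$, which is immediate from inaccessibility, and the observation that the $\lambda$-centeredness proof in Lemma~\ref{lem:dense}(\ref{item:Qventer}) produces a canonical common extension of any two conditions with the same trunk, so that the inductive construction of the name $r(\alpha)$ for $\alpha\in R$ goes through and yields a genuine condition $r\in P$ below both $p$ and $p'$.
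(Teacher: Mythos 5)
Your proof is correct and follows essentially the same approach as the paper: pass to the dense set $D$ of conditions with ground-model-determined trunks, apply the $\Delta$-system lemma to supports, pigeonhole on trunk sequences restricted to the root using $|\IS|\le\lambda$ and $\lambda^{<\lambda}=\lambda$, and conclude compatibility via the $\lambda$-centeredness of each iterand. The only difference is that you spell out the recursive construction of the common extension $r$, which the paper leaves implicit.
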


\begin{proof}
    Assume $(a_i)_{i\in\lambda^+}$ is a sequence in $P$.
    For every $a_i$ find an $a'_i\le a$ in $D$.
    By Fodor (or the Delta-system lemma)
    there is an $X\subseteq \lambda^+$
    of size $\lambda^+$ such that 
    $\{\dom(a'_i):\, i\in X\}$ form a Delta system
    with heart $\Delta$,
    and furthermore we can assume that $t(a'_i)\restriction \Delta$ 
    (the sequence 
    of trunks restricted to $\Delta$)
    is the same for all
    $i\in X$.
    (There are $\lambda^{|\Delta|}=\lambda<\lambda^+$
    many such restrictions.)
    Then for $i,j$ in $X$, the conditions 
    $a'_i$ and $a'_j$ (and therefore also $a_i$
    and $a_j$) 
    are compatible.
\end{proof}

\begin{remark}
    Generally, preserving $\lambda^+$-cc for $\lambda>\omega_1$ is much more cumbersome than for $\lambda=\omega$, as there is no obvious universal theorem analogous to
    ``the finite support iteration of ccc forcings is ccc''. In our case, it was very easy to show $\lambda^+$-cc manually.
    However, we could have used existing iteration theorems. We give two examples (but there surely are many more). Note that the following theorems do not require $\lambda$
    to be inaccessible.
\begin{enumerate}[leftmargin=*]\item
    From~\cite{MR1716019} 
    (generalising the $\lambda=\aleph_1$ case from~\cite[Lem.\ 4.1]{MR0823775}):     \begin{itemize}
        \item Definition~\cite[p.\ 237]{MR1716019}: $Q$ is $\lambda$-centered closed, if a centered
    subset $D$ of $Q$ of size ${<}\lambda$ has a lower bound.
    \item Lemma~\cite[p.\ 237]{MR1716019}:
    Assume $2^{<\lambda}=\lambda$. Let 
    $P$ 
    be a ${<}\lambda$-support iteration such
    that each iterand
is (forced to be) $\lambda$-linked and $\lambda$-centered closed. Then 
$P$
is
$\lambda^{+}$-cc.
    \end{itemize}
    It is easy to see that our $Q$ satisfies the
    requirements ($Q$ is even $\lambda$-centered and ``$\lambda$-linked closed'').

    \item 
    From \cite{Sh:1144} (generalizing the $\lambda=\aleph_1$ case from~\cite[3.1]{Sh:80}):
    \begin{itemize}
        \item \cite[Def.\  2.2.2]{Sh:1144}: $Q$ is ``stationary $\lambda^+$-Knaster'',
         if for every sequence $(p_i)_{i < \lambda^+}$
         in $Q$ there exists a club
$E \subseteq \lambda^+$ and a regressive function
$f$ on $E \cap S^{\lambda^+}_\lambda$ such that $p_i$ and $p_j$ are compatible 
whenever 
$f(i) = f(j)$.
        \item \cite[Lem.\ 2.2.5]{Sh:1144}: Assume that $P$ is a ${<}\lambda$-support iteration of iterands that all are: 
        stationary $\lambda^+$-Knaster,   strategically ${<}\lambda$-closed, and any two compatible conditions have a greatest lower bound, as do decreasing $\omega$-sequences.
        Then $P$ is stationary $\lambda^+$-Knaster.
    \end{itemize}
    Note that our $Q$ satisfies
    the requirements, and that
    our proof of $\lambda^+$-cc
    actually shows stationary $\lambda^+$-Knaster.
\end{enumerate}
\end{remark}

\bibliographystyle{amsalpha}
\bibliography{shlhetal,1251}

\providecommand{\bysame}{\leavevmode\hbox to3em{\hrulefill}\thinspace}
\providecommand{\MR}{\relax\ifhmode\unskip\space\fi MR }
\providecommand{\MRhref}[2]{%
  \href{http://www.ams.org/mathscinet-getitem?mr=#1}{#2}
}
\providecommand{\href}[2]{#2}
\begin{thebibliography}{FGVV24}

\bibitem[Bau83]{MR0823775}
James~E. Baumgartner, \emph{Iterated forcing}, Surveys in set theory, London
  Math. Soc. Lecture Note Ser., vol.~87, Cambridge Univ. Press, Cambridge,
  1983, pp.~1--59. \MR{823775}

\bibitem[BGS21]{Sh:1144}
Thomas Baumhauer, Martin Goldstern, and Saharon Shelah, \emph{{[Sh:1144] The
  higher Cicho\'{n} diagram}}, Fund. Math. \textbf{252} (2021), no.~3,
  241--314, \href{https://arxiv.org/abs/1806.08583}{arXiv: 1806.08583}
  \href{https://www.ams.org/mathscinet-getitem?mr=4178868}{MR4178868}
  \href{https://doi.org/10.4064/fm666-4-2020}{DOI: 10.4064/fm666-4-2020}.

\bibitem[FGVV24]{farah2024corona}
Ilijas Farah, Saeed Ghasemi, Andrea Vaccaro, and Alessandro Vignati,
  \emph{Corona rigidity}, 2024, arXiv:2201.11618.

\bibitem[KLS]{Sh:1224}
Jakob Kellner, Anda Latif, and Saharon Shelah, \emph{{[Sh:1224] On
  automorphisms of $\mathcal P(\lambda)/[\lambda]^{<\lambda}$}},
  \href{https://arxiv.org/abs/2206.02228}{arXiv: 2206.02228}.

\bibitem[LM16]{MR3480121}
Paul Larson and Paul McKenney, \emph{Automorphisms of {$\mathcal
  P(\lambda)/\mathcal I_\kappa$}}, Fund. Math. \textbf{233} (2016), no.~3,
  271--291. \MR{3480121}

\bibitem[She78]{Sh:80}
Saharon Shelah, \emph{{[Sh:80] A weak generalization of MA to higher
  cardinals}}, Israel J. Math. \textbf{30} (1978), no.~4, 297--306,
  \href{https://www.ams.org/mathscinet-getitem?mr=0505492}{MR0505492}
  \href{https://doi.org/10.1007/BF02761994}{DOI: 10.1007/BF02761994}.

\bibitem[Shi99]{MR1716019}
Masahiro Shioya, \emph{Partition properties of subsets of {${\scr
  P}_\kappa\lambda$}}, Fund. Math. \textbf{161} (1999), no.~3, 325--329.
  \MR{1716019}

\bibitem[SS]{Sh:990a}
Saharon Shelah and Juris Stepr{\={a}}ns, \emph{{[Sh:990a] Non-trivial
  automorphisms of $\mathcal P(\mathbb N)/[\mathbb N]^{<\aleph_0}$ from
  variants of small dominating number (corrected)}}, Corrected version of
  [Sh:990].

\bibitem[SS15]{Sh:990}
\bysame, \emph{{[Sh:990] Non-trivial automorphisms of $\mathcal P(\mathbb
  N)/[\mathbb N]^{<\aleph_0}$ from variants of small dominating number}}, Eur.
  J. Math. \textbf{1} (2015), no.~3, 534--544,
  \href{https://www.ams.org/mathscinet-getitem?mr=3401904}{MR3401904}
  \href{https://doi.org/10.1007/s40879-015-0058-0}{DOI:
  10.1007/s40879-015-0058-0}.

\end{thebibliography}
\end{document}